\documentclass[11pt,reqno]{amsart}
\usepackage{amsmath,amssymb,bbm,url}

\usepackage{tikz}
\usetikzlibrary{calc}
\usetikzlibrary{plotmarks}

\newtheorem{theorem}{Theorem}[section]
\newtheorem{lemma}[theorem]{Lemma}

\newtheorem{corollary}[theorem]{Corollary}
\newtheorem{conjecture}[theorem]{Conjecture}
\newtheorem*{remarks}{Remarks}

\numberwithin{equation}{section}

\newcommand{\e}{\varepsilon}
\newcommand{\R}{{\mathbb R}}
\newcommand{\Real}{\operatorname{Re}}
\newcommand{\tr}{\operatorname{tr}}
\newcommand{\Z}{{\mathbb Z}}

\begin{document}

\title[]{Sums of Laplace eigenvalues --- rotationally symmetric maximizers in the plane}

\author[]{R. S. Laugesen and B. A. Siudeja}
\address{Department of Mathematics, University of Illinois, Urbana,
IL 61801, U.S.A.} \email{Laugesen\@@illinois.edu,Siudeja\@@illinois.edu}
\date{\today}

\keywords{Isoperimetric, membrane, tight frame.}
\subjclass[2010]{\text{Primary 35P15. Secondary 35J20,52A40}}

\begin{abstract}
The sum of the first $n \geq 1$ eigenvalues of the Laplacian is shown to be maximal among triangles for the equilateral triangle, maximal among parallelograms for the square, and maximal among ellipses for the disk, provided the ratio $\text{(area)}^3/\text{(moment of inertia)}$ for the domain is fixed. This result holds for both Dirichlet and Neumann eigenvalues, and similar conclusions are derived for Robin boundary conditions and Schr\"{o}dinger eigenvalues of potentials that grow at infinity. A key ingredient in the method is the tight frame property of the roots of unity.

For general convex plane domains, the disk is conjectured to maximize sums of Neumann eigenvalues.
\end{abstract}

\maketitle

\vspace*{-12pt}

\section{\bf Introduction}

Eigenvalues of the Laplacian represent frequencies in wave motion, rates of decay in diffusion, and energy levels in quantum mechanics. Eigenvalues are challenging to understand: they are known in closed form on only a handful of domains. This difficulty has motivated considerable work on estimating eigenvalues in terms of simpler, geometric quantities such as area and perimeter.

We will obtain a sharp bound on the sum of the first $n \geq 1$ eigenvalues of linear images of rotationally symmetric domains. Our methods apply equally well to Dirichlet, Robin, and Neumann boundary conditions.

Write $\lambda_1, \lambda_2, \lambda_3, \ldots$ for the Dirichlet eigenvalues of the Laplacian on a plane domain of area $A$ and moment of inertia $I$. We will prove that for each $n \geq 1$, the normalized, scale invariant eigenvalue sum
\begin{equation} \label{introeq1}
(\lambda_1 + \dots + \lambda_n) \frac{A^3}{I}
\end{equation}
is maximal among triangular domains for the equilateral triangle. Among parallelograms the maximizer is the square, and the disk is the maximizer among ellipses. The only case known previously was the fundamental tone, $n=1$, due to P\'{o}lya \cite{P52}.

An analogous result will be shown for the sum of Neumann eigenvalues,
\begin{equation} \label{introeq2}
(\mu_1 + \dots + \mu_n) \frac{A^3}{I} ,
\end{equation}
and then for Robin and Sch\"{o}dinger eigenvalues too. These latter results are new even for $n=1$. See Section~\ref{results}.

Our work suggests conjectures for general convex domains. Is the Dirichlet eigenvalue sum \eqref{introeq1} maximal for the disk? Not when $n=1$, curiously, because any rectangle or equilateral triangle gives a larger value for the fundamental tone. We conjecture that those domains maximize $\lambda_1 A^3/I$. For the Neumann eigenvalue sum \eqref{introeq2} it does seem plausible to conjecture maximality for the disk, as we discuss in Section~\ref{open}.

Central to the paper is a new technique we call the
\[
\text{``Method of Rotations and Tight Frames''.}
\]
The idea is to linearly transplant the eigenfunctions of the extremal domain and then average with respect to allowable rotations of that domain. This averaging of the Rayleigh quotient is accomplished using a ``tight frame'' or Parseval identity for the root-of-unity vectors. The Hilbert--Schmidt norm of the linear transformation arises naturally in such averaging, and is then represented in terms of the moment of inertia.

\subsection*{Intuition}
The eigenvalue sum \eqref{introeq1} can be written as a product of two scale invariant factors, as
\[
(\lambda_1 + \dots + \lambda_n) A \cdot \frac{A^2}{I} .
\]
The first factor, $(\lambda_1 + \dots + \lambda_n) A$, normalizes by the area of the domain, and so can be thought of as a generalized ``Faber--Krahn'' term (although the Faber--Krahn theorem says $\lambda_1 A$ is minimal for the disk, not maximal).

The second factor, $A^2/I$, is purely geometric and measures the ``deviation from roundness'' of the domain. This factor is small when the domain is elongated, and hence it balances the largeness of the first factor on such domains.

The motivating intuition is that for a domain with characteristic length scales $a$ and $b$, we have
\[
\lambda_1 \
\lesssim \ \frac{1}{a^2} + \frac{1}{b^2} = \frac{ab(a^2+b^2)}{(ab)^3} \ \lesssim \ \frac{I}{A^3} .
\]
This rough calculation is exact for rectangles, up to constant factors.

The first inequality in the calculation extends readily to higher dimensions, but the algebraic identity in the middle becomes more complicated. Thus in higher dimensions it seems that the moment of inertia should be evaluated instead on some kind of ``reciprocal'' domain that has length scales $1/a$ and $1/b$ and so on. Higher dimensional results of this nature will be developed in a later paper \cite{LS11c}; the maximizing domains are regular tetrahedra, cubes, and other Platonic solids.

\subsection*{Related work}

The major contribution of this paper is that it proves upper bounds that are geometrically sharp, on eigenvalue sums of arbitrary length. We do not know any similar results in the literature.

Some results of a different type are known, as we now describe. A bound due to Kr\"{o}ger \cite{K92} for Neumann eigenvalues says that $(\mu_1 + \dots + \mu_n)A/n^2 \leq 2\pi$. The inequality is asymptotically sharp for each domain, because $\mu_n A \sim 4\pi n$ by the Weyl asymptotics. But Kr\"{o}ger's bound is not geometrically sharp for fixed $n$, because there are no domains for which equality holds. Kr\"{o}ger's result should be viewed as a weak version of the P\'{o}lya conjecture. That conjecture asserts that the Weyl asymptotic estimate is in fact a strict upper bound on each Neumann eigenvalue. It has been proved for tiling domains by Kellner \cite{K66}, and up through the third eigenvalue for simply connected plane domains by Girouard, Nadirashvili and Polterovich \cite{GNP09}, but it remains open in general.

Kr\"{o}ger also proved an upper bound on Dirichlet eigenvalue sums, involving $\e$-neighborhoods of the boundary \cite{K94}. This bound is again not geometrically sharp. Kr\"{o}ger's estimates were generalized to domains in homogeneous spaces by Strichartz \cite{S96}.

Weak versions of P\'{o}lya's conjectured lower bound for Dirichlet eigenvalues \cite{P61} are due to Berezin \cite{B72} and Li and Yau \cite{LY83}, with later developments by Laptev \cite{L97} and others using Riesz means and ``universal'' inequalities, as surveyed by Ashbaugh \cite{A02}. Useful \emph{upper} bounds on eigenvalue sums in terms of other eigenvalue sums have lately been obtained this way, by Harrell and Hermi \cite[Corollary 3.1]{HH08}. Note P\'{o}lya's lower bound has been investigated also for eigenvalues under a constant magnetic field, by Frank, Loss and Weidl \cite{FLW09}.

There is considerable literature on low eigenvalues of domains constrained by perimeter, in-radius, or conformal mapping radius, rather than moment of inertia. We summarize this literature in Section~\ref{literature}.

Eigenvalues of triangular domains have been studied a lot, in recent years \cite{AF06,AF08,F06,F07,FS10,LS09,LS10,LR08,S07,S10}, and this paper extends the theory of their upper bounds. Lower bounds on Dirichlet eigenvalues of triangles are proved in a companion paper \cite{LS11a}: there the triangles are normalized by diameter (rather than area and moment of inertia) and equilateral triangles are shown to minimize (rather than maximize) the eigenvalue sums.

For broad surveys of isoperimetric eigenvalue inequalities, one can consult the
monographs of Bandle \cite{B80}, Henrot \cite{He06}, Kesavan
\cite{K06} and P\'{o}lya--Szeg\H{o} \cite{PS51}, and the survey paper by Ashbaugh \cite{A99}.

\section{\bf Assumptions and notation}
\label{notation}

\subsection*{Eigenvalues}
For a bounded plane domain $D$, we denote the Dirichlet eigenvalues of the Laplacian by $\lambda_j(D)$, the Robin eigenvalues by $\rho_j(D;\sigma)$ where the constant $\sigma>0$ is the Robin parameter, and the Neumann eigenvalues by $\mu_j(D)$. In the Robin and Neumann cases we make the standing assumption that the domain has Lipschitz boundary, so that the spectra are well defined. Denoting the eigenfunctions by $u_j$ in each case, we have
\[
\begin{cases}
-\Delta u_j = \lambda_j u_j \quad \text{in $D$} \\
\hfill u_j = 0 \quad \text{on $\partial D$}
\end{cases}
\quad
\begin{cases}
\hfill -\Delta u_j = \rho_j u_j \quad \text{in $D$} \\
\frac{\partial u_j}{\partial n}  + \sigma u_j = 0 \quad \text{on $\partial D$}
\end{cases}
\quad
\begin{cases}
-\Delta u_j = \mu_j u_j \quad \text{in $D$} \\
\hfill \frac{\partial u_j}{\partial n}  = 0 \quad \text{on $\partial D$}
\end{cases}
\]
and
\[
0 < \lambda_1 < \lambda_2 \leq \lambda_3 \leq \dots \qquad
0 < \rho_1 < \rho_2 \leq \rho_3 \leq \dots \qquad
0 = \mu_1 < \mu_2 \leq \mu_3 \leq \dots .
\]
The Robin case reduces to Neumann when $\sigma=0$, and formally reduces to the Dirichlet case when $\sigma=\infty$.

The corresponding Rayleigh quotients are
\begin{align*}
\text{Dirichlet:} \qquad R[u] &  = \frac{\int_D |\nabla u|^2 \, dx}{\int_D u^2 \, dx} && \text{for\ } u \in H^1_0(D) , \\
\text{Robin:} \qquad R[u] &  = \frac{\int_D |\nabla u|^2 \, dx + \sigma \int_{\partial D} u^2 \, ds}{\int_D u^2 \, dx} && \text{for\ } u \in H^1(D) , \\
\text{Neumann:} \qquad R[u] &  = \frac{\int_D |\nabla u|^2 \, dx}{\int_D u^2 \, dx} && \text{for\ } u \in H^1(D) .
\end{align*}
The \textbf{Rayleigh--Poincar\'{e} Principle} \cite[p.~98]{B80} characterizes the sum of the first $n \geq 1$ eigenvalues as
\begin{align*}
& \lambda_1 + \dots + \lambda_n \\
& = \min \big\{ R[v_1] + \dots + R[v_n] : v_1, \dots,v_n \in H^1_0(D)\text{\ are pairwise orthogonal in $L^2(D)$} \big\}
\end{align*}
in the Dirichlet case, and similarly in the Robin and Neumann cases (using trial functions in $H^1$ instead of $H^1_0$).

\subsection*{Geometric quantities}
Let
\begin{itemize}
  \item[] $A=$ area,
  \item[] $I=$ moment of inertia about the centroid.
\end{itemize}
That is,
\[
  I(D) = \int_D |x-\overline{x}|^2 \, dx
\]
where the centroid is $\overline{x} = \frac{1}{A(D)} \int_D x \, dx$.

Given a matrix $M$, write its Hilbert--Schmidt norm as
\[
\lVert M \rVert_{HS}=\big( \sum_{j,k} M_{jk}^2 \big)^{\! 1/2} = (\tr M M^\dagger)^{\! 1/2}
\]
where $M^\dagger$ denotes the transposed matrix.

\section{\bf Sharp upper bounds on eigenvalue sums} \label{results}

\subsection*{Dirichlet and Neumann eigenvalues} Our first result examines the effect on eigenvalues of linearly transforming a rotationally symmetric domain, like in Figure~\ref{fig:lineartrans}.
\begin{figure}[ht]
  \begin{center}
    \begin{tikzpicture}[scale=2]
      \draw +(90:1) -- +(210:1) -- 
      +(330:1) -- cycle;
      \fill circle (0.02);
      \draw[blue,->] (0:0.2) arc (0:270:0.2);
      \draw[cm={1,1,0,1,(4,0)}] [rotate=60] +(110:1) -- +(230:1) -- +(350:1) -- cycle;
      \fill [cm={1,1,0,1,(4,0)}] circle (0.02);
      \draw[blue,->] (1,0.5) [in=160,out=20] to (3,0.5) node [pos=0.5,above] {$T$}
      node [pos=0.5,below] {linear};
    \end{tikzpicture}
  \end{center}
  \caption{A domain $D$ with rotational symmetry, and its image under a linear map $T$.}
  \label{fig:lineartrans}
\end{figure}
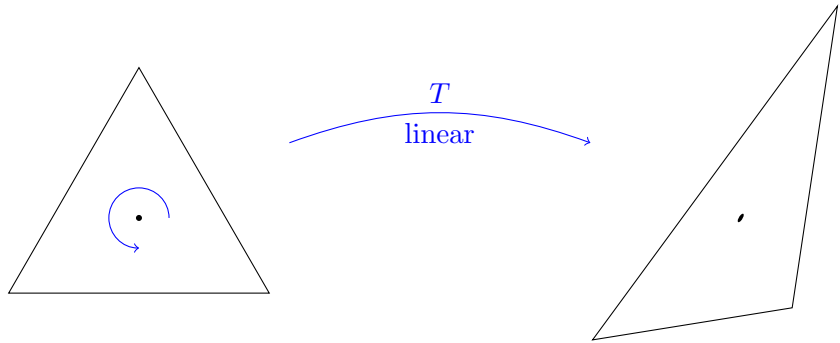
\begin{theorem} \label{2dimDN}
If $D$ has rotational symmetry of order greater than or equal to $3$, then
\begin{equation} \label{DNeq}
(\lambda_1 + \dots + \lambda_n) \big|_{T(D)} \leq
\frac{1}{2} \lVert T^{-1} \rVert_{HS}^2 \, (\lambda_1 + \dots + \lambda_n) \big|_D
\end{equation}
for each $n \geq 1$ and each invertible linear transformation $T$ of the plane. The same inequality holds for the Neumann eigenvalues.

Equality holds in \eqref{DNeq} for the first Dirichlet eigenvalue ($n=1$) if and only if either
\begin{enumerate}
\item[(i)] $T$ is a scalar multiple of an orthogonal matrix, or
\item[(ii)] $D$ is a square and $T(D)$ is a rectangle (possibly with sets of capacity zero removed).
\end{enumerate}
Equality holds for the second Neumann eigenvalue ($n=2$) if and only if
$T$ is a scalar multiple of an orthogonal matrix.
\end{theorem}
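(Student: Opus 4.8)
The inequality \eqref{DNeq} will be obtained by linearly transplanting the eigenfunctions of $D$ to $T(D)$ and then averaging over the rotations that fix $D$. Let $k\ge 3$ be the order of rotational symmetry, place the centroid of $D$ at the origin, let $R_\theta$ denote rotation by angle $\theta$, and set $S:=T^{-1}(T^{-1})^\dagger$, a symmetric positive definite matrix with $\tr S=\lVert T^{-1}\rVert_{HS}^2$. Take an $L^2(D)$-orthonormal family $u_1,\dots,u_n$ of first Dirichlet eigenfunctions of $D$. For each $m=0,\dots,k-1$ the functions $v_j^{(m)}:=u_j\circ(TR_{2\pi m/k})^{-1}$ lie in $H^1_0(T(D))$, because $TR_{2\pi m/k}(D)=T(D)$, and they are pairwise orthogonal in $L^2(T(D))$; changing variables gives $R[v_j^{(m)}]=\int_D(R_{2\pi m/k}\nabla u_j)\cdot S(R_{2\pi m/k}\nabla u_j)\,dx$. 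I would then apply the Rayleigh--Poincar\'e principle for each $m$, average over $m$, and invoke the tight-frame identity for the roots of unity,
\[
\tfrac1k\sum_{m=0}^{k-1}(R_{2\pi m/k}w)(R_{2\pi m/k}w)^\dagger=\tfrac12|w|^2 I\qquad(k\ge3,\ w\in\R^2),
\]
which collapses the averaged right-hand side to $\tfrac12\tr(S)\sum_j\int_D|\nabla u_j|^2\,dx=\tfrac12\lVert T^{-1}\rVert_{HS}^2(\lambda_1+\dots+\lambda_n)\big|_D$. The Neumann case is identical, using $H^1$ trial functions; the constant first Neumann eigenfunction contributes nothing on either side.

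For the equality analysis, note first that equality forces $\sum_jR[v_j^{(m)}]=(\lambda_1+\dots+\lambda_n)\big|_{T(D)}$ for \emph{every} $m$ (numbers that all dominate their average must all equal it), so each $v_j^{(m)}$ is an eigenfunction of $T(D)$ with eigenvalue $\Lambda_j:=\tfrac12\lVert T^{-1}\rVert_{HS}^2\lambda_j$. Pulling this back to $D$ gives $-\operatorname{div}(S\nabla u_j)=\Lambda_j u_j$ in $D$, together with the conormal condition $(S\nabla u_j)\cdot\mathbf n=0$ on $\partial D$ in the Neumann case, where $\mathbf n=(n_1,n_2)$. Diagonalizing $S$ by a rotation of coordinates (harmless for hypothesis and conclusion), write $S=\operatorname{diag}(s_1,s_2)$. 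If $s_1=s_2$ then $S$ is scalar, so $T$ is a scalar multiple of an orthogonal matrix -- case (i), which plainly yields equality for every $n$. If $s_1\ne s_2$, then combining $s_1\partial_{11}u_j+s_2\partial_{22}u_j=-\Lambda_ju_j$ with $\Delta u_j=-\lambda_ju_j$ and the value of $\Lambda_j$ forces the overdetermined system $\partial_{11}u_j=\partial_{22}u_j=-\tfrac{\lambda_j}{2}u_j$ in $D$; by interior regularity and unique continuation, $u_j$ is then the restriction to $D$ of a trigonometric polynomial with frequency vectors in $\sqrt{\lambda_j/2}\,\{(\pm1,\pm1)\}$, which span a $4$-dimensional space $\mathcal F$. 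In the Neumann case the two boundary conditions combine into $n_1\partial_1u_j=n_2\partial_2u_j=0$ on $\partial D$.

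For $n=1$ (Dirichlet), $\lambda_1$ is simple and $u_1>0$, so the $k$-fold symmetry gives $u_1\circ R_{2\pi/k}=u_1$. A real $R_{2\pi/k}$-invariant element of $\mathcal F$ with $k\ge3$ must use all four frequencies (the other negation-symmetric frequency sets are not $R_{2\pi/k}$-invariant, and in any case yield functions constant along a line, which cannot bound a domain), and then invariance forces $k=4$ and $u_1=c\cos\nu x_1\cos\nu x_2$ with $\nu=\sqrt{\lambda_1/2}$. Its positivity component about the origin is an axis-parallel square $Q_0$; since $u_1>0$ on $D$ and $u_1\in H^1_0(D)$, strict domain monotonicity of $\lambda_1$ gives $D=Q_0$ up to a set of capacity zero. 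Hence $D$ is a square, and undoing the coordinate rotation that diagonalized $S$ shows $T(D)$ is a rectangle -- case (ii). Conversely, a direct computation with the product eigenfunction of a square carried to a rectangle confirms equality in case (ii), finishing the Dirichlet $n=1$ statement.

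For $n=2$ (Neumann), $\mu_1=0$ with a constant eigenfunction, so the $j=1$ term is automatically an equality and the mechanism above applies to \emph{every} $u_2$ in the second Neumann eigenspace $E$ of $D$. Assume $s_1\ne s_2$. Then $E$ embeds into $\mathcal F$; since $E$ is $R_{2\pi/k}$-invariant and the rotated copies of $\mathcal F$ intersect only in $\{0\}$ unless $k=4$, one gets $k=4$ and $E$ is one of the $R_{\pi/2}$-invariant subspaces of $\mathcal F$. If $E$ contains $\cos\nu x_1\cos\nu x_2$ or $\sin\nu x_1\sin\nu x_2$, then counting the nodal domains of that function near the centroid contradicts Courant's bound of two nodal domains for a second eigenfunction (or contradicts $\int_D u_2=0$). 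The only remaining case, $E=\langle\sin\nu(x_1+x_2),\,\sin\nu(x_1-x_2)\rangle$, forces $\partial D$ onto the grid of lines where $\nabla u_2$ vanishes, which together with $4$-fold symmetry and Courant's bound pins $D$ down to a square; but then the conormal condition on an edge of that square reduces to $(s_1-s_2)\cos\nu(x_1-x_2)=0$, a contradiction. Hence only case (i) occurs, and conversely it gives equality. The part I expect to be the main obstacle is this last equality analysis on a possibly non-convex $D$: establishing the ``restriction of a global trigonometric polynomial'' conclusion from the overdetermined system, the capacity-zero identification of $D$ with $Q_0$, and above all the bookkeeping for the Neumann $n=2$ case, where one must enumerate the admissible symmetry orders and candidate second eigenspaces and exclude each in turn.
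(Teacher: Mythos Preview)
Your plan is correct and follows essentially the same route as the paper: linearly transplant the eigenfunctions of $D$ to $T(D)$, average over the rotations fixing $D$, and collapse the average with the tight-frame identity to obtain \eqref{DNeq}; for equality, reduce (via your diagonalization of $S$, equivalently the paper's SVD of $T$) to the overdetermined system $\partial_{11}u=\partial_{22}u=-\tfrac{\lambda}{2}u$, land in the four-dimensional trigonometric space, and finish with nodal-domain and boundary-condition arguments. The only organizational difference is in the Neumann $n=2$ endgame: the paper fixes one $u_2$, uses its rotates to force $k=4$, and then rules out each of the four basis functions directly via Courant's theorem and the combined Neumann/conormal conditions, whereas you argue through the $R_{\pi/2}$-invariant subspaces of the full eigenspace $E\subset\mathcal F$; both routes reach the same contradiction from $n_1\partial_1u=n_2\partial_2u=0$ on $\partial D$.
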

The proof is in Section~\ref{2dimDN_proof}. Notice equality does hold in the theorem when $T$ is a scalar multiple of an orthogonal matrix, because if $T=rS$ where $S$ is orthogonal, then $\lambda_j \big( T(D) \big) = r^{-2} \lambda_j(D)$ by rescaling and rotation, and $\frac{1}{2} \lVert T^{-1} \rVert_{HS}^2 = r^{-2}$.

The rotationally symmetric domain $D$ in the theorem need not be convex, need not be a regular polygon, and need not have any axis of symmetry. For example, it could be shaped like a three-bladed propeller.

P\'{o}lya obtained the theorem for $n=1$ (the Dirichlet fundamental tone), although with no equality statement. He stated this result in \cite{P52}, and P\'{o}lya and Schiffer proved it along with results for torsional rigidity and capacity in \cite[Chapter~IV]{PS53}. Our method differs subtly from theirs, as we explain in Section~\ref{2dimDN_proof}, and this difference allows us to handle higher eigenvalue sums and Neumann eigenvalues too.

To express the theorem more geometrically, we observe that the Hilbert--Schmidt norm of the transformation $T^{-1}$ can be expressed in terms of moment of inertia and area (Lemma~\ref{2dimHSnorm}). Hence in particular:
\begin{corollary} \label{regularpolyDN}
Among triangles, the normalized Dirichlet eigenvalue sum
\begin{equation} \label{DNquantity}
(\lambda_1 + \dots + \lambda_n) \frac{A^3}{I}
\end{equation}
is maximal for the equilateral triangle, for each $n \geq 1$. When $n=1$, every maximizer is equilateral.

Among parallelograms, the quantity \eqref{DNquantity} is maximal for the square. When $n=1$, every maximizer is a rectangle and every rectangle is a maximizer.

Among ellipses, the quantity \eqref{DNquantity} is maximal for the disk. When $n=1$, every maximizer is a disk.

The normalized Neumann eigenvalue sum
\[
(\mu_2 + \dots + \mu_n) \frac{A^3}{I}
\]
is maximal among triangles for the equilateral triangle, among parallelograms for the square, and among ellipses for the disk, for each $n \geq 2$. When $n=2$, every maximizer is an equilateral triangle, square, or disk, respectively.
\end{corollary}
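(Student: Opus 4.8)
The plan is to deduce the corollary directly from Theorem~\ref{2dimDN}, using the fact that each of the three families in question is the family of linear images of a single rotationally symmetric model domain, and then converting the Hilbert--Schmidt factor into the geometric ratio $A^3/I$ by Lemma~\ref{2dimHSnorm}.

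First I would fix the model domain $D$: the equilateral triangle (rotational symmetry of order $3$), the square (order $4$), or the disk (order $\infty$), according to the class under consideration. Every triangle is $T(D)$ for some invertible linear $T$ when $D$ is equilateral; likewise every parallelogram is a linear image of the square and every ellipse is a linear image of the disk. Since each model has rotational symmetry of order $\geq 3$, Theorem~\ref{2dimDN} applies and gives
\[
(\lambda_1 + \dots + \lambda_n)\big|_{T(D)} \leq \tfrac12 \lVert T^{-1}\rVert_{HS}^2 \,(\lambda_1 + \dots + \lambda_n)\big|_D .
\]
Next I would multiply through by $A^3/I$ evaluated on $T(D)$. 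The content of Lemma~\ref{2dimHSnorm} is precisely that for a domain $D$ with rotational symmetry of order $\geq 3$ one has $\tfrac12\lVert T^{-1}\rVert_{HS}^2 \cdot A(T(D))^3/I(T(D)) = A(D)^3/I(D)$; this uses $A(T(D)) = \lvert\det T\rvert A(D)$ and $I(T(D)) = \lvert\det T\rvert\,\tfrac12\lVert T\rVert_{HS}^2\, I(D)$ (the latter because the inertia matrix $\int_D yy^\dagger\,dy$ of a rotationally symmetric domain equals $\tfrac12 I(D)$ times the identity, so $\int_D |Ty|^2\,dy = \tfrac12 I(D)\,\tr(T^\dagger T)$), together with the $2\times 2$ identity $\lVert T^{-1}\rVert_{HS} = \lVert T\rVert_{HS}/\lvert\det T\rvert$ coming from the adjugate. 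Feeding this in turns the displayed inequality into
\[
(\lambda_1 + \dots + \lambda_n)\,\frac{A^3}{I}\,\bigg|_{T(D)} \leq (\lambda_1 + \dots + \lambda_n)\,\frac{A^3}{I}\,\bigg|_D ,
\]
which is the asserted maximality of the model domain, the comparison being meaningful by scale invariance of $A^3/I$. The Neumann statement follows identically from the Neumann half of Theorem~\ref{2dimDN}, noting $\mu_1 \equiv 0$ so that $\mu_2 + \dots + \mu_n = \mu_1 + \dots + \mu_n$.

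Finally I would read off the equality statements from the equality clauses of Theorem~\ref{2dimDN}. For $n=1$ in the Dirichlet case: in the triangle and ellipse classes the exceptional case~(ii) cannot occur, since the model $D$ is not a square, so equality forces $T$ to be a scalar multiple of an orthogonal matrix, i.e.\ $T(D)$ is similar to the model; hence the only maximizing triangles are equilateral and the only maximizing ellipses are disks. In the parallelogram class $D$ is the square, both cases~(i) and~(ii) arise, and equality holds exactly when $T(D)$ is a rectangle; since the normalized sum then equals its value on the square, every rectangle is a maximizer and every maximizer is a rectangle. For $n=2$ in the Neumann case, Theorem~\ref{2dimDN} permits equality only when $T$ is a scalar multiple of an orthogonal matrix, so the unique maximizers are the equilateral triangle, the square, and the disk respectively.

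Since Theorem~\ref{2dimDN} and Lemma~\ref{2dimHSnorm} are already available, the corollary is essentially bookkeeping; I expect no real obstacle, the only mild care needed being the transformation law for $I$ under a linear map and keeping the three equality regimes straight. The substantive work resides in the proof of the theorem itself, where the rotation-and-tight-frame averaging enters.
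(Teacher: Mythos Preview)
Your proposal is correct and follows essentially the same approach as the paper: reduce each class to linear images of a rotationally symmetric model domain, apply Theorem~\ref{2dimDN}, and convert the Hilbert--Schmidt factor to $A^3/I$ via Lemma~\ref{2dimHSnorm}. The paper's proof is in fact even more terse than yours; you have additionally spelled out the equality-case analysis (reading off cases (i) and (ii) of Theorem~\ref{2dimDN} in each of the three families), which the paper leaves implicit. One small omission: strictly speaking a general triangle is a \emph{translate} of $T(D)$, not $T(D)$ itself, but since $\lambda_j$, $A$, and $I$ are all translation invariant this is harmless.
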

The Neumann case with $n=1$ is not interesting, because the first eigenvalue equals $0$ for each domain.

\begin{remarks} \rm \

1. The method extends to linear images of regular $N$-gons for any $N$, but the most interesting cases are triangles and parallelograms ($N=3$ and $N=4$), as considered in the corollary.

\smallskip
2. For triangles, the moment of inertia can be calculated in terms of the side lengths $l_1,l_2,l_3$ as
\begin{equation} \label{trianglemoment}
I = \frac{A}{36} (l_1^2+l_2^2+l_3^2) .
\end{equation}
For a parallelogram with adjacent side lengths $l_1,l_2$, the moment of inertia equals
\begin{equation} \label{paramoment}
I = \frac{A}{12} (l_1^2+l_2^2) .
\end{equation}

\smallskip
3. The eigenvalues and eigenfunctions of the extremal domains (the equilateral triangle, square and disk) are \emph{not} used in our proofs. The eigenvalues are stated anyway in Appendix~\ref{eigenapp}, for reference. It is interesting to substitute them into Corollary~\ref{regularpolyDN} and obtain explicit estimates on eigenvalue sums. For example, for the Dirichlet fundamental tone ($n=1$) one obtains that
\begin{equation} \label{lambda1values}
\lambda_1 \frac{A^3}{I} \leq
\begin{cases}
12\pi^2 & \text{for triangles, with equality for equilaterals,} \\
12\pi^2 & \text{for parallelograms, with equality for rectangles,} \\
2 j_{0,1}^2 \pi^2 \simeq 11.5\pi^2 & \text{for ellipses, with equality for disks.} \\
\end{cases}
\end{equation}

All three inequalities were obtained by P\'{o}lya \cite{P52}, \cite[p.~308,328]{PS53}. The first inequality, for triangles, was rediscovered with a different proof by Freitas \cite[Theorem 1]{F06}. The second inequality was rediscovered for the special case of rhombi by Hooker and Protter \cite[\S5]{HP60} and for all parallelograms by Hersch \cite[formula (5)]{H66}, again with different proofs. These authors stated their results in terms of side lengths, as
\begin{equation} \label{lambda1again}
\lambda_1 \leq
\begin{cases}
(l_1^2+l_2^2+l_3^2)\pi^2/3A^2 & \text{for triangles,} \\
(l_1^2+l_2^2)\pi^2/A^2 & \text{for parallelograms.}
\end{cases}
\end{equation}
These inequalities are equivalent to P\'{o}lya's by formulas \eqref{trianglemoment} and \eqref{paramoment} for the moment of inertia.

For the first nonzero Neumann eigenvalue we find from Corollary~\ref{regularpolyDN} and Appendix~\ref{eigenapp} that
\begin{equation} \label{neumanntone}
\mu_2 \frac{A^3}{I} \leq
\begin{cases}
4\pi^2 & \text{for triangles, with equality for equilaterals,} \\
6\pi^2 & \text{for parallelograms, with equality for squares,} \\
2 (j^\prime_{1,1})^2 \pi^2 \simeq 6.8\pi^2 & \text{for ellipses, with equality for disks.} \\
\end{cases}
\end{equation}
These inequalities too can be stated in terms of side lengths. For stronger inequalities on $\mu_2$, see Section~\ref{literature}.

For $n=3$, the Corollary says $(\mu_2+\mu_3)A^3/I$ is maximal for the equilateral triangle. This result was proved recently by the authors using a different method with explicit trial functions \cite[Theorem 3.5]{LS09}.

\smallskip
4. Corollary~\ref{regularpolyDN} becomes false when applied to individual eigenvalues instead of eigenvalue sums. For example, $\lambda_3 A^3/I$ is not maximal for the square among rectangles: to the contrary, it is locally minimal. The underlying reason is that a square has a double eigenvalue $\lambda_2=\lambda_3$ that ``splits'' when the square is deformed into a rectangle of the same area; the second eigenvalue decreases and the third increases, while the moment of inertia varies only at second order.
\end{remarks}

\subsection*{Robin eigenvalues} In the next theorem we normalize the Robin parameter in terms of $T^{-1}$, in order to obtain a scale invariant expression.
\begin{theorem} \label{2dimR}
If $D$ has rotational symmetry of order greater than or equal to $3$, then
\[
\left. (\rho_1 + \dots + \rho_n) \frac{A^3}{I} \right|_{\sigma \lVert \, T^{-1} \rVert_{HS}/\sqrt{2} \, , \, T(D)} \, \leq \,
\left. (\rho_1 + \dots + \rho_n) \frac{A^3}{I} \right|_{\, \sigma \, , \, D}
\]
for each $n \geq 1$ and each invertible linear transformation $T$ of the plane.

Equality holds for the first Robin eigenvalue ($n=1$) if and only if $T$ is a scalar multiple of an orthogonal matrix.
\end{theorem}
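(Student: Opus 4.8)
The plan is to run the ``Method of Rotations and Tight Frames'' of Section~\ref{2dimDN_proof}, carrying the Robin boundary integral through the averaging. Suppose $D$ is invariant under the rotation $U$ by angle $2\pi/N$ about its centroid, for some $N\ge3$, and let $u_1,\dots,u_n$ be pairwise $L^2$-orthogonal eigenfunctions of $(D,\sigma)$ for the eigenvalues $\rho_1,\dots,\rho_n$; abbreviate $\widetilde\sigma:=\sigma\lVert T^{-1}\rVert_{HS}/\sqrt2$. First I would transplant, putting $v_{j,k}:=u_j\circ(TU^k)^{-1}$ on $T(D)$; this is well defined since $U^k(D)=D$, and for each fixed $k$ the $v_{1,k},\dots,v_{n,k}$ are pairwise orthogonal in $L^2(T(D))$ because $TU^k$ has constant Jacobian $\lvert\det T\rvert$. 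The Rayleigh--Poincar\'e principle on $(T(D),\widetilde\sigma)$ gives, for each $k$,
\[
(\rho_1+\dots+\rho_n)\big|_{\widetilde\sigma,\,T(D)}\ \le\ \sum_{j=1}^n R[v_{j,k}],
\]
and I average these $N$ inequalities. The $L^2$-denominator of $R[v_{j,k}]$ is $\lvert\det T\rvert\int_D u_j^2$, the same for every $k$, so only the numerators must be averaged. For the gradient part, the chain rule and the substitution $x=(TU^k)^{-1}y$ produce the matrix $\frac1N\sum_k (U^k)^\dagger T^{-1}(T^{-1})^\dagger U^k$, which the tight-frame identity for the $N$th roots of unity ($N\ge3$) evaluates to $\tfrac12\lVert T^{-1}\rVert_{HS}^2\,I$; hence $\frac1N\sum_k\int_{T(D)}\lvert\nabla v_{j,k}\rvert^2=\tfrac12\lVert T^{-1}\rVert_{HS}^2\lvert\det T\rvert\int_D\lvert\nabla u_j\rvert^2$, exactly as in Theorem~\ref{2dimDN}.

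The genuinely new ingredient is the boundary term. Parametrizing $\partial D$ by arclength as $\gamma$, a change of variables gives $\int_{\partial T(D)}v_{j,k}^2\,ds=\int_{\partial D}u_j^2\,\lvert TU^k\dot\gamma\rvert\,ds$, so averaging over $k$ and applying Cauchy--Schwarz (the power--mean inequality) pointwise,
\[
\frac1N\sum_k\int_{\partial T(D)}v_{j,k}^2\,ds\ \le\ \int_{\partial D}u_j^2\Bigl(\tfrac1N\sum_k\lvert TU^k\dot\gamma\rvert^2\Bigr)^{\!1/2}ds\ =\ \frac{\lVert T\rVert_{HS}}{\sqrt2}\int_{\partial D}u_j^2\,ds,
\]
where the last step again uses the tight-frame identity, now applied to $T^\dagger T$, together with $\lvert\dot\gamma\rvert\equiv1$. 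Here I invoke the planar identity $\lVert T\rVert_{HS}=\lvert\det T\rvert\,\lVert T^{-1}\rVert_{HS}$ (immediate from the singular values of a $2\times2$ matrix), which turns $\widetilde\sigma\,\lVert T\rVert_{HS}/\sqrt2$ into $\tfrac12\,\sigma\,\lvert\det T\rvert\,\lVert T^{-1}\rVert_{HS}^2$. Adding the gradient and boundary contributions and using $\int_D\lvert\nabla u_j\rvert^2+\sigma\int_{\partial D}u_j^2=\rho_j(D;\sigma)\int_D u_j^2$ gives $\frac1N\sum_k R[v_{j,k}]\le\tfrac12\lVert T^{-1}\rVert_{HS}^2\,\rho_j(D;\sigma)$, and summing over $j$,
\[
(\rho_1+\dots+\rho_n)\big|_{\widetilde\sigma,\,T(D)}\ \le\ \tfrac12\lVert T^{-1}\rVert_{HS}^2\,(\rho_1+\dots+\rho_n)\big|_{\sigma,\,D}.
\]
Finally Lemma~\ref{2dimHSnorm} expresses $\tfrac12\lVert T^{-1}\rVert_{HS}^2$ as the ratio of $\tfrac{A^3}{I}\big|_D$ to $\tfrac{A^3}{I}\big|_{T(D)}$, and the claimed inequality follows at once. (Taking $\sigma=0$ recovers the Neumann case of Theorem~\ref{2dimDN}, a useful check.)

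For the equality statement ($n=1$), equality forces equality at every step. Equality in the Cauchy--Schwarz step for $u_1$ means $\lvert TU^k\dot\gamma(t)\rvert$ is independent of $k$ for a.e.\ $t\in\partial D$ (here one uses that, for $\sigma>0$, the first Robin eigenfunction is positive up to $\partial D$). A short computation with the singular-value decomposition shows this is possible only if either $T^\dagger T$ is a scalar matrix --- so $T$ is a scalar multiple of an orthogonal matrix, the asserted case --- or $N=4$ and every tangent direction of $\partial D$ bisects the eigendirections of $T^\dagger T$, in which case $\partial D$ must be polygonal with all sides in those two bisecting directions. On the other hand, equality in the Rayleigh--Poincar\'e step makes $v_{1,0}=u_1\circ T^{-1}$ a first Robin eigenfunction of $(T(D),\widetilde\sigma)$; pulling the eigenvalue equation back through $T$ shows $u_1$ satisfies both $-\Delta u_1=\rho_1 u_1$ and $-\tr\!\bigl((T^\dagger T)^{-1}\nabla^2 u_1\bigr)=\widetilde\rho_1 u_1$ in $D$, so (if $T^\dagger T$ is not scalar) $\partial_\xi^2u_1=\alpha u_1$ and $\partial_\eta^2u_1=\beta u_1$ with $\alpha,\beta$ constant in the eigenbasis $(\xi,\eta)$ of $T^\dagger T$; then $u_1$ is a sum of products of one-variable functions of $\xi$ and of $\eta$, and being positive and $U$-invariant it must equal $c\cos(k\xi)\cos(k\eta)$ --- exactly as in the analysis of Theorem~\ref{2dimDN}. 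But the Robin condition $\partial_n u_1+\sigma u_1=0$ then forces the sides of this (square) domain to lie along the $(\xi,\eta)$-axes, not along their bisectors, contradicting the previous sentence. Hence only the conformal case survives; conversely it does, by the rescaling remark after Theorem~\ref{2dimDN}.

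The step I expect to be the main obstacle is this last, equality, analysis: it requires importing from the proof of Theorem~\ref{2dimDN} the precise route that converts ``product structure plus rotational invariance plus boundary condition'' into ``$D$ is a square aligned with the eigenaxes of $T^\dagger T$'', and it requires enough boundary regularity (fine for Lipschitz domains with some care) that $u_1>0$ up to $\partial D$, so that equality really propagates into the boundary Cauchy--Schwarz step. Everything else is bookkeeping --- the one subtlety there being that the gradient averaging produces the factor $\lVert T^{-1}\rVert_{HS}^2$ while the boundary averaging produces $\lVert T\rVert_{HS}$, and the normalization $\widetilde\sigma=\sigma\lVert T^{-1}\rVert_{HS}/\sqrt2$ is precisely what makes these combine, through $\lVert T\rVert_{HS}=\lvert\det T\rvert\,\lVert T^{-1}\rVert_{HS}$, into the single factor $\tfrac12\lVert T^{-1}\rVert_{HS}^2$ matching Lemma~\ref{2dimHSnorm}.
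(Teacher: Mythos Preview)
Your argument for the inequality is correct and is essentially the paper's own proof: the same transplantation $v_{j,k}=u_j\circ(TU^k)^{-1}$, the same Cauchy--Schwarz on the averaged boundary stretch factor $\frac1N\sum_k\lvert TU^k\tau\rvert$, the same tight-frame evaluation, the same use of the planar identity $\lVert T\rVert_{HS}=\lvert\det T\rvert\,\lVert T^{-1}\rVert_{HS}$ to reconcile the boundary factor with the gradient factor, and Lemma~\ref{2dimHSnorm} at the end.

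Your equality analysis also follows the paper's route (reduction via SVD, the two simultaneous elliptic equations forcing separated second derivatives, elimination of all but the $N=4$ case, $u_1=c\cos(k\xi)\cos(k\eta)$, and Cauchy--Schwarz equality forcing all tangent directions to bisect the eigendirections of $T^\dagger T$), but your closing sentence is not right as stated: the Robin condition does not ``force the sides of this (square) domain to lie along the $(\xi,\eta)$-axes''. What the paper does instead is combine the two pieces of information directly: with the normal equal to $(\epsilon_1,\epsilon_2)/\sqrt2$ at a boundary point, the condition $\partial_n u_1+\sigma u_1=0$ becomes $\epsilon_1\tan(k\xi)+\epsilon_2\tan(k\eta)=\sqrt2\,\sigma/k$, so almost every boundary point lies on one of these four curves; but each such curve has slope $\pm1$ at only finitely many points in the nodal square, contradicting the requirement that the tangent be at $\pm45^\circ$ almost everywhere. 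That computation replaces your last step and closes the argument.
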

The subscript ``$\sigma , D$'' on the right side of the inequality specifies the domain where the eigenvalues and geometric quantities are to be evaluated, and also the value of the Robin parameter to be used. The subscript on the left side of the inequality similarly specifies the domain $T(D)$ and the Robin parameter $\sigma \lVert T^{-1} \rVert_{HS}/\sqrt{2}$ to be used there.
\begin{corollary} \label{regularpolyR}
Fix the Robin parameter $\sigma>0$. Among all triangles of the same area, the quantity
\[
(\rho_1 + \dots + \rho_n) \frac{A^3}{I}
\]
is maximal for the equilateral triangle. When $n=1$, every maximizer is equilateral.

Analogous results hold among parallelograms and ellipses, with squares and disks being the maximizers, respectively.
\end{corollary}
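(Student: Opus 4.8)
The plan is to derive the corollary from Theorem~\ref{2dimR} by the same scheme that passes from Theorem~\ref{2dimDN} to Corollary~\ref{regularpolyDN}, the one extra ingredient being that Robin eigenvalues increase with the parameter~$\sigma$. Let $E$ be an arbitrary triangle of a given area $A_0$, translated so that its centroid is the origin, and let $D$ be the equilateral triangle of area $A_0$ centered at the origin. Then $E = T(D)$ for some invertible linear map $T$, and since a linear map scales area by the absolute value of its determinant, $|\det T| = A_0/A_0 = 1$. Applying Theorem~\ref{2dimR} with this $D$ (which has rotational symmetry of order $3$) gives
\[
\left. (\rho_1 + \dots + \rho_n) \frac{A^3}{I} \right|_{\sigma^\prime , E} \le
\left. (\rho_1 + \dots + \rho_n) \frac{A^3}{I} \right|_{\sigma , D} ,
\qquad \sigma^\prime := \sigma \lVert T^{-1} \rVert_{HS}/\sqrt{2} .
\]
If $s_1, s_2$ denote the singular values of $T$, then $\lVert T^{-1} \rVert_{HS}^2 = s_1^{-2} + s_2^{-2} \ge 2(s_1 s_2)^{-1} = 2 |\det T|^{-1} = 2$ by the arithmetic--geometric mean inequality, so $\sigma^\prime \ge \sigma$.

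Next I would invoke monotonicity in the Robin parameter. The Robin Rayleigh quotient $R[u]=\big(\int_E |\nabla u|^2\,dx + \sigma\int_{\partial E} u^2\,ds\big)/\int_E u^2\,dx$ is nondecreasing in $\sigma$, so by the Rayleigh--Poincar\'e principle each eigenvalue satisfies $\rho_j(E;\sigma) \le \rho_j(E;\sigma^\prime)$, whence $(\rho_1+\dots+\rho_n)(E;\sigma) \le (\rho_1+\dots+\rho_n)(E;\sigma^\prime)$. Multiplying by the ($\sigma$-independent) geometric factor $A(E)^3/I(E)$ and chaining with the displayed inequality yields
\[
\left. (\rho_1 + \dots + \rho_n) \frac{A^3}{I} \right|_{\sigma , E} \le
\left. (\rho_1 + \dots + \rho_n) \frac{A^3}{I} \right|_{\sigma , D} ,
\]
which is exactly the asserted maximality for the equilateral triangle among triangles of area $A_0$. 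The parallelogram and ellipse statements follow the same way: every parallelogram is a linear image of the square of the same area (rotational symmetry of order $4$) and every ellipse is a linear image of the disk of the same area, so the three displayed lines apply verbatim with $D$ replaced accordingly.

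For the equality case $n=1$, suppose $E$ is a maximizer; then both inequalities in the chain must be equalities. Equality in $\rho_1(E;\sigma)\le\rho_1(E;\sigma^\prime)$ forces $\sigma=\sigma^\prime$, because $\rho_1$ is \emph{strictly} increasing in the Robin parameter (the first Robin eigenfunction has nonzero trace on $\partial E$, so $\int_{\partial E} u_1^2\,ds>0$). Hence $\lVert T^{-1}\rVert_{HS}^2=2$, which is the equality case of the arithmetic--geometric mean step and forces $s_1=s_2$; combined with $|\det T|=1$ this makes $T$ orthogonal, so $E$ is congruent to $D$, i.e.\ equilateral. Conversely the equilateral triangle of area $A_0$ is congruent to $D$ and gives equality; the parallelogram and ellipse cases are identical, leaving squares and disks as the only $n=1$ maximizers. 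The deduction is short and I do not expect a genuine obstacle: the single point requiring care is the rescaled Robin parameter $\sigma\lVert T^{-1}\rVert_{HS}/\sqrt2$ appearing in Theorem~\ref{2dimR}, and the resolution is simply that it is at least $\sigma$, so it can be lowered back to $\sigma$ at the cost of decreasing the eigenvalue sum --- precisely the direction needed. As in the Dirichlet and Neumann cases, no information about the eigenfunctions of the equilateral triangle, square, or disk enters the argument.
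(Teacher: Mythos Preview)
Your proof is correct and follows the same two-step scheme as the paper: apply Theorem~\ref{2dimR}, then use monotonicity of the Robin eigenvalues in $\sigma$ to replace the rescaled parameter $\sigma'=\sigma\lVert T^{-1}\rVert_{HS}/\sqrt{2}$ by $\sigma$. The one difference is how you establish $\sigma'\ge\sigma$. You fix $|\det T|=1$ from the outset and use the arithmetic--geometric mean inequality on the singular values to get $\lVert T^{-1}\rVert_{HS}^2\ge 2$ directly. The paper instead first proves the slightly stronger scale-invariant statement that $(\rho_1+\dots+\rho_n)\big|_{\sigma\sqrt{I/A^3},\,G}\,A^3/I\big|_G$ is maximal for the equilateral among \emph{all} triangles, and then, after restricting to equal area, invokes the geometric fact that the equilateral minimizes $I$ among triangles of given area (proved via the side-length formula for $I$, Cauchy--Schwarz, and the triangular isoperimetric inequality). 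By Lemma~\ref{2dimHSnorm} these two inequalities are equivalent, so the content is the same; your route is simply more direct and avoids the geometric detour. Your equality argument via strict monotonicity of $\rho_1$ in $\sigma$ is also valid and is somewhat more self-contained than appealing to the equality clause of Theorem~\ref{2dimR}.
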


\subsection*{Schr\"{o}dinger eigenvalues}
Consider the Schr\"{o}dinger eigenvalue problem
\[
-h \Delta u +Wu = {\mathcal E} u
\]
in the plane, with Planck constant $h>0$ and real-valued potential $W \in L^\infty_{loc}(\R^2)$ that tends to $+\infty$ as $|x| \to \infty$. The spectrum is discrete \cite[Theorem XIII.67]{RS78}, and the eigenvalues ${\mathcal E}_j$ are characterized in the usual way by the Rayleigh quotient
\[
R[u] = \frac{h \int_{\R^2} |\nabla u|^2 \, dx + \int_{\R^2} Wu^2 \, dx}{\int_{\R^2} u^2 \, dx} , \qquad u \in H^1(\R^2) \cap L^2(W) .
\]
Here $L^2(W)$ denotes the weighted space with measure $|W| \, dx$.

Once more we show that a rotationally symmetric situation maximizes the sum of eigenvalues.
\begin{theorem} \label{2dimS}
If $W$ has rotational symmetry of order greater than or equal to $3$, then
\[
({\mathcal E}_1 + \dots + {\mathcal E}_n) \big|_{2h / \lVert T^{-1} \rVert_{HS}^2,W \circ T^{-1}} \leq
({\mathcal E}_1 + \dots + {\mathcal E}_n) \big|_{h,W}
\]
for each $n \geq 1$ and each invertible linear transformation $T$ of the plane.

When $n=1$, equality holds if and only if $T$ is a scalar multiple of an orthogonal matrix.
\end{theorem}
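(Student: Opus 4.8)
The plan is to repeat the Method of Rotations and Tight Frames used for Theorem~\ref{2dimDN}, now with the confining Schr\"odinger operator in place of the Dirichlet Laplacian and with all of $\R^2$ playing the role of the transformed domain. Write $G$ for the finite cyclic group of rotations leaving $W$ invariant, so $|G|=N\ge 3$, and put $\tilde h:=2h/\lVert T^{-1}\rVert_{HS}^2$. Since $T^{-1}$ is a linear isomorphism we have $W\circ T^{-1}\to+\infty$ at infinity too, so the operator $-\tilde h\Delta+W\circ T^{-1}$ again has discrete spectrum and admits the Rayleigh--Poincar\'e characterization of eigenvalue sums. Let $u_1,\dots,u_n$ be eigenfunctions for $(h,W)$, orthonormal in $L^2(\R^2)$, so that $({\mathcal E}_1+\dots+{\mathcal E}_n)|_{h,W}=\sum_{j=1}^n R_{h,W}[u_j]$, and transplant them by setting $v_j^S:=u_j\circ S\circ T^{-1}$ for $S\in G$. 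A linear change of variables preserves $H^1(\R^2)$ and, since $\int|W\circ T^{-1}|(v_j^S)^2=|\det T|\int|W|u_j^2<\infty$, each $v_j^S$ lies in the form domain of the transformed operator.

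Next I would compute the transplanted Rayleigh quotients. Substituting $x=ST^{-1}y$ and using $W\circ S=W$, the Jacobian $|\det T|$ cancels and
\[
R_{\tilde h,\,W\circ T^{-1}}[v_j^S]=\tilde h\int_{\R^2}\bigl|(ST^{-1})^\dagger\nabla u_j\bigr|^2\,dx+\int_{\R^2}W u_j^2\,dx .
\]
Averaging over $S\in G$: the potential term is independent of $S$, while for the gradient term I apply the tight-frame identity $\sum_{S\in G}\langle Sr,w\rangle^2=\tfrac N2|r|^2|w|^2$ (valid for the $N\ge 3$ rotated roots of unity) to the two rows $r_1,r_2$ of $(T^{-1})^\dagger$, pointwise in $x$ with $w=\nabla u_j(x)$, giving $\tfrac1N\sum_{S\in G}|(ST^{-1})^\dagger w|^2=\tfrac12\lVert T^{-1}\rVert_{HS}^2\,|w|^2$. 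The normalization $\tilde h=2h/\lVert T^{-1}\rVert_{HS}^2$ is chosen exactly so that $\tilde h\cdot\tfrac12\lVert T^{-1}\rVert_{HS}^2=h$; hence $\tfrac1N\sum_{S\in G}R_{\tilde h,\,W\circ T^{-1}}[v_j^S]=h\int|\nabla u_j|^2+\int W u_j^2=R_{h,W}[u_j]$. Summing over $j$ and averaging over $S\in G$ the Rayleigh--Poincar\'e inequality $({\mathcal E}_1+\dots+{\mathcal E}_n)|_{\tilde h,\,W\circ T^{-1}}\le\sum_{j=1}^n R_{\tilde h,\,W\circ T^{-1}}[v_j^S]$ yields the asserted bound.

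For the equality statement ($n=1$), the ``if'' direction is a direct computation: when $T=rU$ with $U$ orthogonal one has $\lVert T^{-1}\rVert_{HS}^2=2r^{-2}$, $\tilde h=hr^2$, and the substitution $y=rUx$ carries $-hr^2\Delta+W(r^{-1}U^{-1}\cdot)$ to $-h\Delta+W$, so the spectra agree. Conversely, if equality holds for $n=1$ then, since each $R_{\tilde h,\,W\circ T^{-1}}[v_1^S]\ge{\mathcal E}_1|_{\tilde h,\,W\circ T^{-1}}$ and the average equals ${\mathcal E}_1|_{h,W}={\mathcal E}_1|_{\tilde h,\,W\circ T^{-1}}$, every $v_1^S$ --- in particular $u_1\circ T^{-1}$ --- must be a ground state of the transformed operator. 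By elliptic regularity $u_1\in W^{2,p}_{loc}$; writing the original and the pulled-back eigenvalue equations and subtracting gives the pointwise identity $\tr\bigl((hI-\tilde h\,T^{-1}(T^{-1})^\dagger)\,\mathrm{Hess}\,u_1\bigr)=0$ a.e., in which the coefficient matrix $M$ is symmetric and trace-free. If $M\ne 0$, then in orthonormal coordinates diagonalizing $M$ the identity reads $\partial_{11}u_1=\partial_{22}u_1$, whose solutions have the travelling-wave form $u_1(x_1,x_2)=f(x_1+x_2)+g(x_1-x_2)$; no nonzero such function lies in $L^2(\R^2)$, contradicting $\lVert u_1\rVert_2=1$. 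Hence $M=0$, i.e.\ $T^{-1}(T^{-1})^\dagger$ is scalar, which says precisely that $T$ is a scalar multiple of an orthogonal matrix.

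I expect the only delicate point to be this converse half of the equality case --- specifically, explaining why the Schr\"odinger problem has no analogue of the exceptional ``square/rectangle'' case~(ii) in Theorem~\ref{2dimDN}. The reason is that the confining hypothesis $W\to+\infty$ forces the ground state to genuinely decay, so it cannot coincide with a travelling-wave solution of $\tr(M\,\mathrm{Hess}\,u_1)=0$; the rest of the argument (the change of variables, the tight-frame averaging, checking the trial functions lie in the form domain) is routine once the normalization $\tilde h=2h/\lVert T^{-1}\rVert_{HS}^2$ of the Planck constant has been identified.
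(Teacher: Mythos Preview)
Your argument for the inequality is exactly the paper's: transplant eigenfunctions by $u_j\circ S\circ T^{-1}$, average over the rotation group, apply the tight-frame identity (Lemma~\ref{tightframematrix}), and observe that the potential term is $S$-invariant so it passes through unchanged. That is precisely what the paper does, and your check that the trial functions lie in the form domain and that the Jacobians cancel is routine and correct.

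For the equality statement the two arguments share the same core idea---both reduce to a one-dimensional wave equation for $u_1$ and then exclude nonzero solutions---but your endgame is a little different and in fact slightly slicker. The paper first uses the singular value decomposition to make $T$ diagonal, obtains from the two eigenvalue equations that $u_{x_1x_1}=u_{x_2x_2}=\tfrac{1}{2h}(\widetilde W-\mathcal E_1)u$, and then \emph{uses the growth of $W$} to force $u\equiv0$ on the strip $\{|x_2|\gg1\}$ before invoking finite propagation for the wave equation $u_{x_1x_1}=u_{x_2x_2}$ to conclude $u\equiv0$ everywhere. You avoid both the SVD reduction and the explicit appeal to $W\to\infty$: you form the trace-free symmetric matrix $M=hI-\tilde h\,T^{-1}(T^{-1})^\dagger$, diagonalize $M$ (rather than $T$), and then use directly that the 1D wave equation has no nonzero $L^2(\R^2)$ solution. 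The cleanest way to justify that last step at the regularity available ($u_1\in W^{2,p}_{loc}$ rather than $C^2$) is via Fourier transform: $\widehat{u_1}$ is supported on the measure-zero set $\{\xi_1^2=\xi_2^2\}$, hence vanishes in $L^2$. Your formulation ``solutions have the travelling-wave form $f(x_1+x_2)+g(x_1-x_2)$'' is morally right but would benefit from either that Fourier remark or a one-line distributional d'Alembert argument; otherwise the proof is complete and correct.
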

The subscript ``$h,W$'' on the right side of the inequality specifies the potential to be used and the value of the Planck constant, and similarly for the subscript on the left side of the inequality.

This Sch\"{o}dinger result formally implies the Dirichlet result in Theorem~\ref{2dimDN}, by taking $W=0$ on $D$ and $W=+\infty$ off $D$, and choosing $h=\lVert T^{-1} \rVert_{HS}^2/2$.

\subsection*{More general quadrilaterals}
Among quadrilaterals we have so far handled only the special class of parallelograms. Now we show how to handle a larger class of quadrilaterals having two ``halves'' of equal area.

To construct such domains, first write the upper and lower halfplanes as
\[
\R^2_+ = \{ (x_1,x_2) : x_2 > 0 \} , \qquad \R^2_- = \{ (x_1,x_2) : x_2 < 0 \} .
\]
Choose two linear transformations $T_+$ and $T_-$ that agree on the $x_1$-axis, with $T_\pm$ mapping $\R^2_\pm$ onto itself. Then the map
\[
T(x) =
\begin{cases}
T_+ x & \text{if\ } x \in \overline{\R^2_+}, \\
T_- x & \text{if\ } x \in \overline{\R^2_-},
\end{cases}
\]
defines a piecewise linear homeomorphism of the plane mapping the upper and lower halfplanes to themselves. Assume also $\det T_+ = \det T_-$, so that $T$ distorts areas by the same factor in the upper and lower halfplanes.

We will not need explicit formulas for the linear transformations $T_+$ and $T_-$, but for the sake of concreteness we present them anyway:
\[
T_\pm =
\begin{pmatrix}
a & c_\pm \\ 0 & b
\end{pmatrix}
\]
where $a \neq 0, b>0$ and $c_\pm \in \R$.

Let $D$ be the square with vertices at $(\pm 1,0),(0,\pm 1)$. Our goal is to show that this square maximizes eigenvalue sums among quadrilaterals of the form $E=T(D)$ . These quadrilaterals have two vertices on the $x_1$-axis and have upper and lower halves of equal area.

Write
\[
I_0(E)_=\int_E |x|^2 \, dx
\]
for the moment of inertia about the origin, for a domain $E$.
\begin{theorem}[Quadrilaterals with equal-area halves] \label{quadDN}
Let $D$ be the square with vertices at $(\pm 1,0),(0,\pm 1)$. Then for every map $T$ constructed as above,
\[
\left. (\lambda_1 + \dots + \lambda_n) \frac{A^3}{I_0} \right|_{T(D)} \leq \left. (\lambda_1 + \dots + \lambda_n) \frac{A^3}{I_0}  \right|_D
\]
for each $n \geq 1$.

The inequality holds also for Neumann eigenvalues.
\end{theorem}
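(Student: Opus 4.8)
The plan is to run the Method of Rotations and Tight Frames once more, but now averaging over the four rotations $R_0,R_1,R_2,R_3$ of the diamond square $D$ through multiples of $\pi/2$, while keeping track of the two triangular halves $D_\pm = D\cap\overline{\R^2_\pm}$ on which the piecewise-linear map $T$ restricts to the linear maps $T_\pm$. Fix $n$ and let $u_1,\dots,u_n$ be the first $n$ Dirichlet eigenfunctions of $D$, pairwise orthogonal in $L^2(D)$. For $k=0,1,2,3$ put $v_i^{(k)} = u_i\circ(TR_k)^{-1}$ on $E = T(D)$. Since $R_kD = D$, the composition $TR_k$ is a piecewise-linear homeomorphism of the plane carrying $D$ onto $E$ whose Jacobian determinant has constant modulus $\lvert\det T_+\rvert = \lvert\det T_-\rvert$; hence for each fixed $k$ the functions $v_1^{(k)},\dots,v_n^{(k)}$ lie in $H^1_0(E)$ and are pairwise orthogonal in $L^2(E)$, so the Rayleigh--Poincar\'e principle gives $(\lambda_1+\dots+\lambda_n)\big|_E \le \sum_i R[v_i^{(k)}]$ for each $k$, hence
\[
(\lambda_1+\dots+\lambda_n)\big|_E \ \le\ \tfrac14\sum_{k=0}^{3}\sum_{i=1}^{n} R[v_i^{(k)}].
\]

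For the second step I would evaluate this averaged Rayleigh sum. A piecewise change of variables (with $TR_k$ equal to $T_+R_k$ on $R_k^{-1}(\R^2_+)$ and to $T_-R_k$ on $R_k^{-1}(\R^2_-)$) shows that every denominator equals $\lvert\det T_+\rvert\int_D u_i^2$, and that the Dirichlet energy $\int_E|\nabla v_i^{(k)}|^2$ equals $\lvert\det T_+\rvert$ times the sum of $\int_{R_k^{-1}D_+}(R_k\nabla u_i)^\dagger M_+(R_k\nabla u_i)$ and $\int_{R_k^{-1}D_-}(R_k\nabla u_i)^\dagger M_-(R_k\nabla u_i)$, where $M_\pm = T_\pm^{-1}(T_\pm^{-1})^\dagger$. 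The crucial observation is that, although the rotations $R_k$ do not preserve the upper/lower splitting of $D$, the four sets $R_k^{-1}D_+$ ($k=0,1,2,3$) are precisely the four ``half-diamonds'' $D_+$, $D_-$, $D\cap\{x_1>0\}$, $D\cap\{x_1<0\}$, each occurring once --- and likewise for the sets $R_k^{-1}D_-$. The pair $\{D_+,D_-\}$ reassembles $D$, and so does the pair $\{D\cap\{x_1>0\},\,D\cap\{x_1<0\}\}$, so summing the four resulting quadratic-form integrands is a tight-frame computation for the fourth roots of unity: the $M_{12}$ terms integrate to $0$ because $\int_{D_+\cup D_-}x_1x_2 = \int_D x_1x_2 = \int_{(D\cap\{x_1>0\})\cup(D\cap\{x_1<0\})}x_1x_2$, while the diagonal terms collapse to $\tr(M_+)\int_D|\nabla u_i|^2$ (and likewise with $M_-$). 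Therefore
\[
(\lambda_1+\dots+\lambda_n)\big|_E \ \le\ \tfrac14\big(\tr M_+ + \tr M_-\big)\,(\lambda_1+\dots+\lambda_n)\big|_D.
\]

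The last step converts that constant into geometry. Writing $\delta = \det T_+ = \det T_-$, one has $\tr M_\pm = \lVert T_\pm\rVert_{HS}^2/\delta^2$; moreover $A(E) = \lvert\delta\rvert\,A(D)$, and the change of variables $x = T_\pm y$ on each half, together with the isotropy of the triangular halves ($\int_{D_\pm}y_1^2 = \int_{D_\pm}y_2^2 = \tfrac16$ and $\int_{D_\pm}y_1y_2 = 0$), gives $I_0(E) = \lvert\delta\rvert\cdot\tfrac16\big(\lVert T_+\rVert_{HS}^2 + \lVert T_-\rVert_{HS}^2\big)$. Since $A(D) = 2$ and $I_0(D) = \tfrac23$, these combine into the exact identity $\tfrac14(\tr M_+ + \tr M_-) = A(D)^3 I_0(E)/\big(A(E)^3 I_0(D)\big)$; substituting into the previous display and rearranging gives exactly
\[
(\lambda_1+\dots+\lambda_n)\,\tfrac{A^3}{I_0}\Big|_E \ \le\ (\lambda_1+\dots+\lambda_n)\,\tfrac{A^3}{I_0}\Big|_D ,
\]
which is the assertion for Dirichlet eigenvalues. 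The Neumann case is identical word for word, using $H^1$ trial functions in place of $H^1_0$.

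I expect the main obstacle to be the combinatorial bookkeeping in the second step: one has to verify that replacing $T$ by $TR_k$ --- which turns $T$ into a piecewise-linear map glued along a rotated line rather than along the $x_1$-axis --- produces, after the change of variables, gradient integrals over the four half-diamonds that reassemble into integrals over all of $D$, so that the cancellation of the cross term in the tight-frame sum really does go through. Once that identification is in place, the geometric identity of the third step is a short computation resting only on the second moments of the two triangular halves being isotropic.
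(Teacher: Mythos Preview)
Your proposal is correct and is essentially the paper's own argument: average the Rayleigh--Poincar\'e bound over the four rotations of $D$, pair rotations differing by $\pi$ so that the half-diamonds $R_k^{-1}D_\pm$ reassemble into all of $D$, apply the tight-frame identity for the fourth roots of unity, and finish with the isotropy of the second-moment matrix of $D_\pm$ (the paper packages the last two steps as its inequality~(7.1) and Lemma~7.1). The one slip is in your justification of the cross-term cancellation: the integrand there is $(\partial_{x_1}u_i)(\partial_{x_2}u_i)$, not $x_1x_2$; with that correction the two reassemblies of $D$ give $+\int_D(\partial_{x_1}u_i)(\partial_{x_2}u_i)$ from $k\in\{0,2\}$ and $-\int_D(\partial_{x_1}u_i)(\partial_{x_2}u_i)$ from $k\in\{1,3\}$, and your computation goes through exactly as you describe.
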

The moment $I_0$ is generally greater than the moment of inertia $I$ for the domain $T(D)$, because the centroid of $T(D)$ need not be at the origin. For example, if both $T_+$ and $T_-$ are shear transformations towards the right, then the centroid of $T(D)$ will lie on the positive $x_1$-axis, to the right of the origin. Meanwhile, the centroid of the rotationally symmetric domain $D$ will always lie at the origin, so that $I(D)=I_0(D)$.

We conjecture that Theorem~\ref{quadDN} can be strengthened to use $I$ instead of $I_0$.

For the first eigenvalue ($n=1$), Freitas and Siudeja  \cite{FS10} showed recently with a computer-assisted proof that
\[
\lambda_1 \, \frac{A^2}{l_1^2+l_2^2+l_3^2+l_4^2}
\]
is maximal for rectangles among \emph{all} quadrilaterals, not just among quadrilaterals with halves of equal area. For parallelograms, this result and Theorem~\ref{quadDN} give the same information (see formula \eqref{lambda1again}). For general quadrilaterals we cannot easily compare the two results, because the relationship between the sum of squares of side lengths and the moment of inertia is unclear.

\section{\bf Open problems for general convex domains} \label{open}
For the Dirichlet fundamental tone we raise:
\begin{conjecture} \label{conj1}
Suppose $\Omega$ is a bounded convex plane domain. Then
\[
\frac{9}{2} \pi^2 < \left. \lambda_1 \frac{A^3}{I} \right|_\Omega \leq 12\pi^2
\]
with equality on the right for equilateral triangles and all rectangles, and asymptotic equality on the left for degenerate acute isosceles triangles and sectors.
\end{conjecture}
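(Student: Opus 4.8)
The plan is to prove the two inequalities by different means, reducing each first to the case of convex polygons, since $A$, $I$, the inradius $\rho$, and $\lambda_1$ all depend continuously on a convex body in the Hausdorff metric and every convex domain is a Hausdorff limit of inscribed convex polygons. The lower bound looks more accessible, so I describe it first.

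For the lower bound I would combine two facts. The first is the classical inradius estimate $\lambda_1\geq\pi^2/(4\rho^2)$ for convex plane domains, which is strict when $\Omega$ is bounded because equality is approached only by infinite strips. The second is the purely geometric inequality $A^3\geq 18\,\rho^2 I$ for bounded convex plane domains. Multiplying gives $\lambda_1 A^3/I > (\pi^2/4)\cdot 18 = \tfrac92\pi^2$. I would prove the geometric inequality by reducing from general convex bodies to polygons and then to triangles: for a triangle one has $\rho=2A/(l_1+l_2+l_3)$ and $I=\tfrac{A}{36}(l_1^2+l_2^2+l_3^2)$ by \eqref{trianglemoment}, so
\[
\frac{A^3}{\rho^2 I}=\frac{9(l_1+l_2+l_3)^2}{l_1^2+l_2^2+l_3^2},
\]
and the bound $\geq 18$ amounts to $2l_1l_2+2l_2l_3+2l_3l_1\geq l_1^2+l_2^2+l_3^2$, equivalently $(l_2+l_3-l_1)^2\leq 4l_2l_3$, equivalently $l_1\geq(\sqrt{l_2}-\sqrt{l_3})^2$; and this follows from the triangle inequality, since $l_1\geq|l_2-l_3|=|\sqrt{l_2}-\sqrt{l_3}|\,(\sqrt{l_2}+\sqrt{l_3})\geq(\sqrt{l_2}-\sqrt{l_3})^2$. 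Equality forces a degenerate isosceles triangle, which is exactly the thin acute isosceles triangle (equivalently the thin circular sector) appearing in the conjecture, and for those shapes the inradius estimate is asymptotically sharp as well; so the constant $\tfrac92\pi^2$ is optimal. The obstacle here is the passage of $A^3\geq 18\rho^2 I$ from triangles to all convex bodies: one wants that among convex bodies with prescribed area and inradius the moment of inertia is largest, in the limit, for a long thin triangle, for instance by showing that collapsing a vertex of a convex polygon does not increase $A^3/(\rho^2 I)$.

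For the upper bound I would first show the supremum of $\lambda_1 A^3/I$ over convex domains is attained. Since $A^2/I$ is bounded above (by $2\pi$, with equality at the disk), the competitors of unit area have $I$ bounded above, and such convex bodies form a compact family modulo rigid motions on which $\lambda_1$ is lower semicontinuous; so a maximizer $\Omega^{*}$ exists, and the maximal value is at least $12\pi^2$ because every rectangle attains that. At $\Omega^{*}$ the first variation of $\lambda_1 A^3/I$ must vanish for admissible boundary deformations of normal speed $V_n$; combining $\delta\lambda_1=-\int_{\partial\Omega^{*}}\left(\tfrac{\partial u_1}{\partial n}\right)^{\!2}V_n\,ds$ (Hadamard) with $\delta A=\int_{\partial\Omega^{*}}V_n\,ds$ and $\delta I=\int_{\partial\Omega^{*}}|x-\overline{x}|^2V_n\,ds$ shows that $\left(\tfrac{\partial u_1}{\partial n}\right)^{\!2}$ is an affine function of $|x-\overline{x}|^2$ along any strictly convex arc of $\partial\Omega^{*}$. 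If $\Omega^{*}$ contained such an arc, a Serrin-type overdetermined argument should force $\Omega^{*}$ to be a disk, whose value $2j_{0,1}^2\pi^2\simeq 11.5\pi^2$ is too small; hence $\Omega^{*}$ must be a convex polygon, and the task reduces to classifying polygonal maximizers. For triangles and rectangles the bound $\lambda_1 A^3/I\leq 12\pi^2$ is already contained in \eqref{lambda1values} and Corollary~\ref{regularpolyDN}, with equality exactly for equilateral triangles and rectangles, and the left-hand strict inequality $\tfrac92\pi^2<\lambda_1 A^3/I$ is supplied by the first half. I expect the genuine difficulty of the whole conjecture to be precisely this classification — excluding a non-rectangular quadrilateral, or a polygon with five or more sides, as the maximizer. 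On a polygon the first variation yields only one scalar condition per edge, which by itself is compatible with sufficiently symmetric polygons, so a second-variation analysis or a new comparison argument seems to be needed, and one must also deal with the non-uniqueness of the extremal domain (every rectangle is one).
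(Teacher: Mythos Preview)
This statement is a \emph{Conjecture} in the paper, not a theorem; the authors do not prove it. They offer only supporting evidence: P\'{o}lya and Szeg\H{o}'s Inclusion Lemma is cited to show that $\lambda_1 A^3/I$ is bounded above and below on convex domains (without identifying the sharp constants), values are tabulated for special families, and a numerical plot is given for isosceles triangles (Figure~\ref{fig:degtriangle}). So there is no ``paper's own proof'' to compare against; what you have written is a strategy for an open problem, and you correctly flag the two substantive gaps yourself --- the passage of the geometric inequality $A^3\geq 18\rho^2 I$ from triangles to arbitrary convex bodies, and the classification of polygonal maximizers for the upper bound.

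Two specific comments on the strategy. First, your compactness step contains a slip: from $A^2/I\leq 2\pi$ you deduce that unit-area competitors have $I$ bounded \emph{above}, but the inequality gives only $I\geq 1/(2\pi)$, a lower bound. In fact $I$ is unbounded among unit-area convex bodies, so existence of a non-degenerate maximizer is not automatic --- and this is genuinely delicate here, since \emph{every} rectangle, including arbitrarily elongated ones, already attains the conjectured value $12\pi^2$. The extremal family is therefore non-compact, which undermines a straightforward first-variation argument and is probably part of why the conjecture is open. Second, your lower-bound route via Hersch's inequality $\lambda_1\geq\pi^2/(4\rho^2)$ together with $A^3\geq 18\rho^2 I$ is internally consistent (both factors are asymptotically sharp for the thin acute isosceles triangle, and your triangle computation is correct), but the geometric inequality $A^3\geq 18\rho^2 I$ for general convex bodies does not appear to be in the literature, and the ``collapse a vertex'' monotonicity you propose would itself need proof.
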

The convexity assumption is necessary on the right side of the conjecture because otherwise one could drive the eigenvalue to infinity without affecting the area or moment of inertia, by removing sets of measure zero (such as curves) from the domain.

The maximizer cannot be the disk in the last conjecture because triangles and rectangles yield a larger value, as we observed already in \eqref{lambda1values}.

As evidence for the conjecture, we note that $\lambda_1 A^3/I$ is bounded above and below on convex domains by an Inclusion Lemma, as was shown by P\'{o}lya and Szeg\H{o} \cite[\S1.19,5.11b]{PS51}. They further evaluated $\lambda_1 A^3/I$ for a variety of triangles, sectors, degenerate ellipses and degenerate sectors \cite[p.~267]{PS51}. Asymptotic expansions can be obtained also for degenerate triangles \cite{F07}. We examine the family of isosceles triangles in Figure~\ref{fig:degtriangle}. All this evidence is consistent with Conjecture~\ref{conj1}.
\begin{figure}[ht]
  \begin{center}
\begin{tikzpicture}[xscale=3,yscale=0.05,smooth]
      \draw[<->] (0,135) node [left] {$\lambda_1A^3/I$} |- (3.3,35) node [pos=1,below] {$\alpha$};
      \draw (0,35) node[below] {$0$};
      \draw[dotted] (pi/3,118.4367) -- (pi/3,35) node[below] {$\pi/3$};
      \draw (pi/2,35) node[below] {$\pi/2$};
      \draw (pi,35) node[below] {$\pi$};
      \draw [mark=|, mark size=10] plot coordinates{(0,35) (pi/3,35) (pi/2,35) (pi,35)};
      \draw[blue] plot coordinates {
      (0,4.5*pi^2)
( 0.004,46.3976)( 0.0156,49.4640)( 0.0281,51.9762)( 0.0405,54.2100)( 0.0530,56.2766)
( 0.0654,58.2275)( 0.0884,61.5993)( 0.1113,64.7672)( 0.1342,67.7790)( 0.1571,70.6616)( 0.1800,73.4310)( 0.2029,76.0971)( 0.2258,78.6664)( 0.2487,81.1431)( 0.2716,83.5298)( 0.2945,85.8284)( 0.3174,88.0400)( 0.3403,90.1654)( 0.3632,92.2052)( 0.3862,94.1598)( 0.4091,96.0297)( 0.4320,97.8152)( 0.4549,99.5169)( 0.4778,101.1354)( 0.5007,102.6714)( 0.5236,104.1257)
( 0.5760,107.1483)( 0.6283,109.7632)( 0.6807,111.9873)( 0.7330,113.8400)( 0.7854,115.3431)( 0.8378,116.5196)( 0.8901,117.3933)( 0.9425,117.9883)( 0.9948,118.3284)( 1.0472,118.4367)( 1.0996,118.3356)( 1.1519,118.0462)( 1.2043,117.5882)( 1.2566,116.9802)( 1.3090,116.2391)( 1.3614,115.3806)( 1.4137,114.4187)( 1.4661,113.3663)( 1.5184,112.2349)( 1.5708,111.0348)( 1.6232,109.7753)( 1.6755,108.4643)( 1.7279,107.1091)( 1.7802,105.7159)( 1.8326,104.2901)( 1.8850,102.8364)( 1.9373,101.3588)( 1.9897,99.8607)( 2.0420,98.3448)( 2.0944,96.8135)
( 2.1991,93.7080)( 2.3038,90.3)( 2.4086,86.8)( 2.5133,83)
      (pi,6*pi^2)
      };
      \draw[dotted] ( 1.0472,118.4367) -- (0,118.4367) node[left] {$12\pi^2$};
      \draw[dotted] ( pi,6*pi^2) -- (0,6*pi^2) node[left] {$6\pi^2$};
      \draw (0,4.5*pi^2) node [left] {$9\pi^2/2$};
      \draw [mark=-, mark size=0.3] plot coordinates{(0,118.4367) (0,6*pi^2) (0,4.5*pi^2)};
\end{tikzpicture}  \end{center}
  \caption{Numerical plot of the normalized Dirichlet fundamental tone $\lambda_1 A^3/I$ for isosceles triangles of aperture $\alpha \in (0,\pi)$. The maximizer is equilateral ($\alpha=\pi/3$), and the minimizer is degenerate acute ($\alpha \to 0$).}
  \label{fig:degtriangle}
\end{figure}
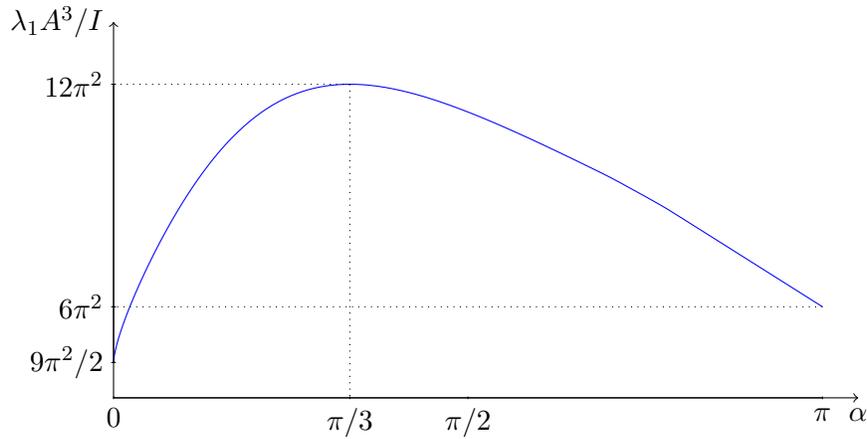

For the first nonzero Neumann eigenvalue, we know $\mu_2 A^3/I$ is definitely maximal for the disk among all bounded domains, by an inequality of Szeg\H{o} and Weinberger (see Section~\ref{literature}). This quantity has no minimizer because it approaches zero for a degenerate rectangle.

Now consider sums of eigenvalues.
\begin{conjecture} \label{2dimDNconj}
Suppose $\Omega$ is a bounded convex plane domain. Then for the Neumann eigenvalues,
\[
\left. (\mu_2 + \dots + \mu_n) \frac{A^3}{I} \right|_{\Omega}
\]
is maximal when $\Omega$ is a disk, for each $n \geq 2$.
\end{conjecture}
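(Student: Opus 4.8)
The plan is to split Conjecture~\ref{2dimDNconj} into the elementary case $n=2$ and the genuinely new content for $n\ge 3$.

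The case $n=2$ can be settled unconditionally and is the natural base of the attack. Write the scale-invariant quantity as a product, $\mu_2\,A^3/I=(\mu_2 A)\cdot(A^2/I)$. The Szeg\H{o}--Weinberger inequality says $\mu_2 A$ is maximal for the disk among \emph{all} bounded domains, strictly unless the domain is a disk, while the classical rearrangement fact that the disk minimizes the moment of inertia among sets of given area gives $A^2/I\le 2\pi$, the value attained by (and only by) disks. Multiplying the two bounds settles $n=2$ with disks as the only maximizers; convexity is not even used. Thus the substance of the conjecture lies entirely in the sums with $n\ge 3$.

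For $n\ge 3$ I would attempt a trial-function argument modeled on the Method of Rotations and Tight Frames, but with the affine maps of Theorem~\ref{2dimDN} replaced by a nonlinear deformation $\Phi\colon\Omega\to B$ of the convex domain $\Omega$ onto a disk $B$. The steps would be: (i) transplant the first $n$ Neumann eigenfunctions of $B$ to $\Omega$ by composition with $\Phi$ (and, where needed, with the entries of $D\Phi$), producing $n-1$ trial functions on $\Omega$; (ii) correct these functions by adding constants and taking linear combinations so that they become pairwise $L^2(\Omega)$-orthogonal and orthogonal to the constants; (iii) insert them into the Rayleigh--Poincar\'{e} characterization and estimate the numerators by averaging over the rotations of the target disk, using a Parseval/tight-frame identity for $SO(2)$ — the continuous analogue of the root-of-unity identity that drives this paper — with the aim of collapsing the gradient integrals into a multiple of $\int_\Omega\lVert D\Phi\rVert_{HS}^2\,(|\nabla u_j|^2\circ\Phi)\,dx$ and then expressing this through the moment of inertia via Lemma~\ref{2dimHSnorm}. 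The choice of $\Phi$ is decisive: a linear $\Phi$ reaches only ellipses, so one wants a genuinely nonlinear map built from the convexity of $\Omega$ — the Brenier map pushing normalized area on $\Omega$ to normalized area on $B$ (which preserves $L^2$ masses and has symmetric positive-definite derivative), or a polar-type map $(r,\theta)\mapsto(r/R(\theta),\theta)$ exploiting that a convex domain is star-shaped, are the natural candidates.

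I expect the main obstacle to be precisely that the averaging mechanism of this paper is tied to the deformation being \emph{linear}: constancy of $D\Phi$ is what makes the rotational average produce a single constant and what makes $\Phi$ commute with the symmetry group. For a nonlinear $\Phi$ on a generic convex $\Omega$ neither holds; indeed a short calculation shows that an exact collapse of the averaged gradient integral to a constant times $\int_B|\nabla u_j|^2$ would force the singular values of $D\Phi$ to be constant, i.e.\ $\Phi$ affine. One must therefore abandon an exact identity and instead \emph{estimate} the resulting integral $\int_B|\nabla u_j|^2\,\lVert D\Phi\rVert_{HS}^2/\lvert\det D\Phi\rvert\,dy$ (with $D\Phi$ evaluated at $\Phi^{-1}(y)$) from above by $2\pi\int_B|\nabla u_j|^2$, combining the convexity of $\Omega$ with the monotonicity of the disk's radial eigenfunction profiles so that the inequality points the right way — and one must do this while simultaneously arranging the roughly $\binom{n-1}{2}$ mutual-orthogonality relations of step (ii), the center-of-mass conditions being the easy part, handled as in Weinberger's proof by a Brouwer-degree continuity argument. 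Carrying out that combined estimate, together with the accompanying rigidity (equality forcing $\Omega$ to be a disk), is where I expect the argument to stall. A realistic intermediate target is the case $n=3$, where only one extra orthogonality relation is needed and the disk has the convenient double eigenvalue $\mu_2=\mu_3$.
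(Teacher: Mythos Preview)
This statement is a \emph{Conjecture} in the paper, not a theorem: the paper offers no proof. The only evidence the authors supply is that the inequality holds for ellipses (by Corollary~\ref{regularpolyDN}) and, for $n=2$, for all bounded domains via the Szeg\H{o}--Weinberger inequality combined with the isoperimetric bound $A^2/I\le 2\pi$ --- exactly the argument you give. So your treatment of $n=2$ is correct and matches what the paper itself remarks in Section~\ref{open} and Section~\ref{literature}.

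For $n\ge 3$ there is nothing in the paper to compare against: the problem is genuinely open. Your own analysis of the proposed nonlinear-transplantation approach is accurate --- you correctly identify that the averaging mechanism underlying Theorem~\ref{2dimDN} relies essentially on the Jacobian of the deformation being \emph{constant}, so that the rotational average collapses to a single scalar multiple of $\int_D|\nabla u_j|^2$. For a nonlinear $\Phi$ this fails, and you correctly observe that forcing the identity to hold exactly would drive $\Phi$ back to being affine. The paper makes no claim that its method extends beyond linear images of rotationally symmetric domains, and your diagnosis of where the argument stalls is precisely why the authors state this as a conjecture rather than a theorem. In short: your $n=2$ argument is a valid proof of a case the paper already flags as known; your $n\ge 3$ discussion is a reasonable research outline, not a proof, and no proof exists in the paper to compare it to.
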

The conjecture is true for the special case of ellipses by Corollary~\ref{regularpolyDN}.

For Dirichlet eigenvalues, the conjecture fails because the square gives a larger value  than the disk for $(\lambda_1+\cdots+\lambda_n)A^3/I$ when $n=1,2,3,5,6,9,10,12$; the disk does give a larger value for all other $n \leq 50$, and we suspect for $n>50$ as well.

\section{\bf Consequences of symmetry: tight frames, and moment matrices} \label{symmetry}

In this section we recall the \emph{tight frame} property of rotationally symmetric systems of vectors, and develop a moment of inertia formula for the linear image of a rotationally symmetric domain. These elementary consequences of symmetry will be used in proving Theorem~\ref{2dimDN}.

\subsection*{Tight frames}
Let $N \geq 3$ and write $U_m$ for the matrix representing rotation by angle $2\pi m/N$, for $m=1,\ldots,N$. For each nonzero $y \in \R^2$, the rotations generate a rotationally symmetric system $\{ U_1 y,\ldots, U_N y \}$ in the plane. For example, the system consists of the $N$th roots of unity when $y=\left( \begin{smallmatrix} 1 \\ 0 \end{smallmatrix} \right)$.

We start with a well known Plancherel-type identity for such systems.
\begin{lemma} \label{2dimtightframe}
Let $N \geq 3$. For all column vectors $x,y \in \R^2$ one has
\[
\frac{1}{N} \sum_{m=1}^N |x \cdot (U_m y)|^2 = \frac{1}{2} |x|^2 |y|^2 .
\]
\end{lemma}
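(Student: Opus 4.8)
The plan is to reduce the identity to a trigonometric sum and evaluate it using the fact that the $N$th roots of unity sum to zero (for $N \geq 3$, in fact $N \geq 2$ suffices but the averaging context wants $N\geq 3$). First I would normalize: by bilinearity in $x$ and in $y$ it suffices to treat unit vectors, so write $x = (\cos\alpha,\sin\alpha)$ and $y=(\cos\beta,\sin\beta)$; then $U_m y = (\cos(\beta+2\pi m/N),\sin(\beta+2\pi m/N))$, and $x\cdot(U_m y) = \cos(\alpha-\beta-2\pi m/N)$. The claim becomes $\frac{1}{N}\sum_{m=1}^N \cos^2(\theta - 2\pi m/N) = \tfrac12$, where $\theta = \alpha-\beta$.

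Next I would apply the double-angle identity $\cos^2\phi = \tfrac12 + \tfrac12\cos(2\phi)$, so that
\[
\frac{1}{N}\sum_{m=1}^N \cos^2\!\Big(\theta - \frac{2\pi m}{N}\Big) = \frac12 + \frac{1}{2N}\sum_{m=1}^N \cos\!\Big(2\theta - \frac{4\pi m}{N}\Big).
\]
The residual sum is the real part of $e^{2i\theta}\sum_{m=1}^N e^{-4\pi i m/N}$. Since $N\geq 3$, the ratio $e^{-4\pi i/N}$ is not equal to $1$, so $\sum_{m=1}^N e^{-4\pi i m/N}$ is a full geometric sum over a nontrivial $N$th (or $(N/\gcd(N,2))$th) root of unity and therefore vanishes; hence the residual sum is zero and the identity follows. (One should note $N=2$ would be the only exceptional value, where $e^{-4\pi i/N}=1$; this is exactly why the hypothesis $N\geq 3$ appears — though as remarked, the hypothesis is inherited from the rotation-averaging application rather than being strictly needed for the scalar identity when $N\geq 3$.)

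Alternatively, and perhaps more cleanly, I would give a coordinate-free argument: the matrix $P_m = (U_m y)(U_m y)^\dagger$ has $x\cdot(U_m y)|^2 = x^\dagger P_m x$, so the left side equals $x^\dagger\big(\tfrac1N\sum_m P_m\big)x$. The averaged matrix $\tfrac1N\sum_m P_m$ commutes with every rotation $U_k$ (relabeling the sum), hence is a scalar multiple of the identity; taking traces gives that scalar as $\tfrac1N\sum_m |U_m y|^2 / 2 = |y|^2/2$, yielding the result. I expect the main (very mild) obstacle to be nothing more than bookkeeping: making sure the geometric-sum cancellation is justified for all $N\geq 3$, i.e., that the relevant root of unity is genuinely not $1$, and being careful that the normalization reduction is valid when $x$ or $y$ is zero (the identity is then trivially $0=0$).
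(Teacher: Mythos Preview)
Your primary argument---normalizing to unit vectors, writing the dot product as a cosine of the angle difference, applying the double-angle formula, and collapsing the residual term via the geometric sum $\sum_{m=1}^N e^{-4\pi i m/N}=0$---is exactly the paper's proof, including the remark that $N\geq 3$ is precisely what ensures $e^{-4\pi i/N}\neq 1$. Your alternative coordinate-free argument (averaging the rank-one matrices $P_m=(U_m y)(U_m y)^\dagger$, observing that the average commutes with each $U_k$ by reindexing, and hence is a scalar multiple of the identity whose trace is $|y|^2$) does not appear in the paper; it is a clean variant that generalizes more transparently to higher dimensions, at the mild cost of invoking the fact that a symmetric $2\times 2$ matrix commuting with a rotation of order $\geq 3$ must be scalar. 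One small correction to your parenthetical: $N=1$ is also exceptional (since $e^{-4\pi i}=1$), not only $N=2$, so the hypothesis $N\geq 3$ is genuinely needed for the identity itself, not merely inherited from the application.
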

\begin{proof}[Proof of Lemma~\ref{2dimtightframe}]
We may suppose $x$ and $y$ have length $1$ and lie at angles $\theta$ and $\phi$ to the positive horizontal axis, respectively. Then
\begin{align*}
\frac{1}{N} \sum_{m=1}^N |x \cdot (U_m y)|^2
& = \frac{1}{N} \sum_{m=1}^N \cos^2(\theta-\phi-2\pi m/N) \\
& = \frac{1}{2} + \frac{1}{2N} \sum_{m=1}^N \cos2(\theta-\phi-2\pi m/N) \\
& = \frac{1}{2} + \frac{1}{2N} \Real \Big( e^{i2(\theta-\phi)} \sum_{m=1}^N (e^{-i4\pi/N})^m \Big) \\
& = \frac{1}{2}
\end{align*}
as desired. The assumption $N \geq 3$ ensures that $e^{-i4\pi/N} \neq 1$ when summing the geometric series, in the last step.
\end{proof}
Figure~\ref{fig:benzframe} illustrates the lemma for $N=3$: it shows the projection formula $\sum_{m=1}^3 (x \cdot U_m y) U_m y = \frac{3}{2} x$ for a typical $x \in \R^2$, where $y=\left( \begin{smallmatrix}0 \\ 1 \end{smallmatrix} \right)$ is the vertical unit vector and $U_m$ denotes rotation by $2\pi m/3$. Dotting the projection formula with $x$ yields the Parseval-type identity in Lemma~\protect\ref{2dimtightframe}.

\newcommand{\benz}[1]{
\begin{tikzpicture}[scale=3]
  \clip (-1.1,-0.8) rectangle (1.1,1.4);
  \draw[black,->] (0,0) -- (90:1) coordinate (a)
  node [pos=1,left] {$y$};
  \draw[black,->] (0,0) -- (210:1) coordinate (b);
  \draw[black,->] (0,0) -- (330:1) coordinate (c);
  \draw[very thick,red,->] (0,0) -- (#1:1) coordinate (d)
  node [pos=1,above] {$x$};
  \draw[dashed,blue] (d) -- ($(0,0)!(d)!(a)$) coordinate (e);
  \draw[dashed,blue] (d) -- ($(0,0)!(d)!(b)$) coordinate (f);
  \draw[dashed,blue] (d) -- ($(0,0)!(d)!(c)$) coordinate (g);
  \draw[thick,blue,->] (0,0) -- (e);
  \draw[thick,blue,->] (0,0) -- (f);
  \draw[thick,blue,->] (0,0) -- (g);
\end{tikzpicture}
\hspace{\fill}
\begin{tikzpicture}[scale=3]
  \clip (-0.1,-0.8) rectangle (1.3,1.4);
  \draw[very thick,red,->] (0,0) -- (d);
  \draw[thick, blue,->] (0,0) -- (e);
  \draw[thick, blue,->] (0,0) -- (f);
  \draw[thick, blue,->] (0,0) -- (g);
  \draw[dashed,green!50!black,-] (g) -- ++(f) -- ++(e) coordinate (h);
  \draw[dashed,green!50!black,-] (g) -- ++(e) -- ++(f);
  \draw[dashed,green!50!black,-] (f) -- ++(e) -- ++(g);
  \draw[dashed,green!50!black,-] (f) -- ++(g);
  \draw[dashed,green!50!black,-] (e) -- ++(f);
  \draw[dashed,green!50!black,-] (e) -- ++(g);
  \draw[green!50!black,->] (0,0) -- (h)
  node [pos=1,above] {$\frac{3}{2}x$};
\end{tikzpicture}
}

\begin{figure}[th]
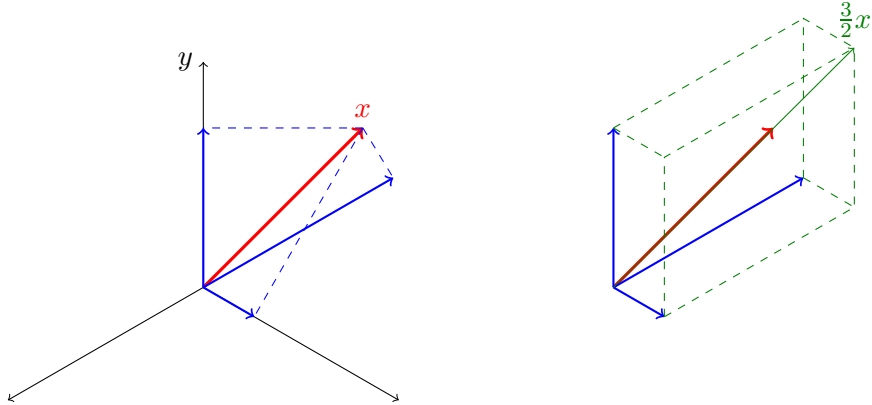

  \begin{center}
    \benz{45}
  \end{center}
  \caption{The ``Mercedes--Benz'' tight frame ($N=3$) in the plane.}
  \label{fig:benzframe}
\end{figure}

The lemma says that the rotationally symmetric system $\{ U_1 y,\ldots, U_N y \}$ forms a \emph{tight frame}. Readers who want to learn about frames and their applications in Hilbert spaces may consult the monograph by Christensen \cite{Ch03} or the text by Han \emph{et al.}\ \cite{H07}.

\smallskip
We next deduce a tight frame identity in which the vector $y$ is replaced by a matrix.
\begin{lemma} \label{tightframematrix}
Let $N \geq 3,  K \geq 1$. For all row vectors $x \in \R^2$ and all $2 \times K$ real matrices $Y$ one has
\[
\frac{1}{N} \sum_{m=1}^N |x U_m Y|^2 = \frac{1}{2} |x|^2 \, \lVert Y \rVert_{HS}^2 .
\]
\end{lemma}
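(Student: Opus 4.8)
The plan is to reduce the matrix identity to the vector identity of Lemma~\ref{2dimtightframe} by expanding in the columns of $Y$. Write $Y = (y^{(1)} \mid \dots \mid y^{(K)})$, where each $y^{(k)} \in \R^2$ is a column vector. Then for each fixed $m$, the row vector $x U_m Y$ has $k$th entry equal to $x U_m y^{(k)} = x \cdot (U_m y^{(k)})$ (viewing $x$ interchangeably as a row or column vector), so that
\[
|x U_m Y|^2 = \sum_{k=1}^K \bigl| x \cdot (U_m y^{(k)}) \bigr|^2 .
\]

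Next I would sum over $m$, interchange the two finite sums, and apply Lemma~\ref{2dimtightframe} to each column separately:
\[
\frac{1}{N} \sum_{m=1}^N |x U_m Y|^2
= \sum_{k=1}^K \frac{1}{N} \sum_{m=1}^N \bigl| x \cdot (U_m y^{(k)}) \bigr|^2
= \sum_{k=1}^K \frac{1}{2} |x|^2 \, |y^{(k)}|^2
= \frac{1}{2} |x|^2 \sum_{k=1}^K |y^{(k)}|^2 .
\]
Finally I would identify $\sum_{k=1}^K |y^{(k)}|^2$ as $\lVert Y \rVert_{HS}^2$, since the Hilbert--Schmidt norm squared is the sum of squares of all entries, which equals the sum over columns of the squared Euclidean norms of those columns. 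This completes the argument.

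There is really no main obstacle here: the only points requiring a little care are the bookkeeping of which index of $Y$ is being summed (making sure that $|xU_mY|^2$ is a sum over the $K$ columns, not the rows) and the legitimacy of swapping the order of the two finite summations, which is automatic. The lemma is genuinely just Lemma~\ref{2dimtightframe} applied columnwise and then repackaged using the definition of $\lVert \cdot \rVert_{HS}$.
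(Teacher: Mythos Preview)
Your proof is correct and follows exactly the same approach as the paper's own proof: decompose $Y$ into its column vectors, expand $|xU_mY|^2$ as a sum over columns, apply Lemma~\ref{2dimtightframe} columnwise, and recognize the resulting sum of squared column norms as $\lVert Y\rVert_{HS}^2$. The paper's proof is simply a terser statement of precisely this argument.
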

\begin{proof}[Proof of Lemma~\ref{tightframematrix}]
Write $y_1,\ldots,y_K$ for the column vectors of $Y$, so that $|x U_m Y|^2
= \sum_{k=1}^K |x U_m y_k|^2$. Now apply Lemma~\ref{2dimtightframe}. 
\end{proof}

\subsection*{Hilbert--Schmidt norms and moment of inertia}
When proving Corollary~\ref{regularpolyDN}, we will need to evaluate the Hilbert--Schmidt norm of $T^{-1}$ in terms of moment of inertia and area.
\begin{lemma} \label{2dimHSnorm}
If the bounded plane domain $D$ has rotational symmetry of order $N \geq 3$, and $T$ is an invertible $2 \times 2$ matrix, then
\[
\frac{1}{2} \lVert T^{-1} \rVert_{HS}^2 = \frac{I}{A^3}(TD) \Big/ \frac{I}{A^3}(D) .
\]
\end{lemma}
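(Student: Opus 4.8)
The plan is to follow the transformation of $A$, $I$, and the Hilbert--Schmidt norm separately, reducing the whole identity to the tight-frame Lemma~\ref{2dimtightframe} together with one algebraic fact peculiar to $2\times 2$ matrices.

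First I would use the rotational symmetry to place the centroid of $D$ at the origin: the centroid is fixed by the rotation through $2\pi/N$, and the origin is the only fixed point of a nontrivial rotation of the plane, so $\overline{x}(D)=0$. Since $T$ is linear, the centroid of $TD$ is then $T\overline{x}(D)=0$ as well. Hence $I(D)=\int_D |x|^2\,dx$ and $I(TD)=\int_{TD}|y|^2\,dy$, and the substitution $y=Tx$ (with $dy=|\det T|\,dx$) gives
\[
A(TD)=|\det T|\,A(D), \qquad I(TD)=|\det T|\int_D |Tx|^2\,dx .
\]

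The heart of the matter is to evaluate $\int_D |Tx|^2\,dx$. Writing $r_1^\dagger, r_2^\dagger$ for the two rows of $T$, we have $|Tx|^2=(r_1\cdot x)^2+(r_2\cdot x)^2$, so it suffices to compute $\int_D (r\cdot x)^2\,dx$ for a fixed vector $r$. Here I would average over the $N$ rotations that fix $D$: since $U_m D=D$, the change of variables $x\mapsto U_m x$ shows $\int_D (r\cdot U_m x)^2\,dx=\int_D (r\cdot x)^2\,dx$ for each $m$, and therefore
\[
\int_D (r\cdot x)^2\,dx=\int_D \frac{1}{N}\sum_{m=1}^N (r\cdot U_m x)^2\,dx=\int_D \tfrac12 |r|^2 |x|^2\,dx=\tfrac12 |r|^2\,I(D)
\]
by Lemma~\ref{2dimtightframe} and the fact that $\int_D |x|^2\,dx=I(D)$. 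Summing over the two rows of $T$ gives $\int_D |Tx|^2\,dx=\tfrac12\big(|r_1|^2+|r_2|^2\big)I(D)=\tfrac12\lVert T\rVert_{HS}^2\,I(D)$.

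Combining the displays,
\[
\frac{I}{A^3}(TD)=\frac{|\det T|\cdot \tfrac12\lVert T\rVert_{HS}^2\,I(D)}{|\det T|^3 A(D)^3}=\frac{\lVert T\rVert_{HS}^2}{2(\det T)^2}\cdot\frac{I}{A^3}(D),
\]
so the lemma reduces to the identity $\lVert T^{-1}\rVert_{HS}^2=\lVert T\rVert_{HS}^2/(\det T)^2$. This is the one genuinely planar ingredient: for a $2\times 2$ matrix $T^{-1}=(\det T)^{-1}\operatorname{adj}(T)$, and $\operatorname{adj}(T)$ has the same four entries as $T$ up to sign and rearrangement, so $\lVert \operatorname{adj}(T)\rVert_{HS}=\lVert T\rVert_{HS}$ and the identity follows. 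I do not expect a substantive obstacle; the only points needing care are remembering that the averaging argument requires $N\geq 3$ (precisely the hypothesis under which Lemma~\ref{2dimtightframe} holds, and it genuinely fails for centrally symmetric domains such as rectangles), and noting that $\lVert T^{-1}\rVert_{HS}=\lVert T\rVert_{HS}/|\det T|$ is special to $2\times 2$ matrices --- which is why the lemma, and this whole circle of results, is two-dimensional.
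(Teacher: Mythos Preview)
Your argument is correct and follows the same overall arc as the paper's proof: place the centroid at the origin, compute $I(TD)$ in terms of $I(D)$ and $\lVert T\rVert_{HS}$, then invoke the two-dimensional identity $\lVert T^{-1}\rVert_{HS}^2=\lVert T\rVert_{HS}^2/(\det T)^2$. The one substantive difference lies in the middle step. The paper introduces the moment matrix $M(D)=\big[\int_D x_j x_k\,dx\big]$ and shows, via the commutation relation $M(D)=UM(D)U^\dagger$ and an eigenvector argument, that $M(D)=\tfrac12 I(D)\,\mathrm{Id}$; then $I(TD)=\tr TM(D)T^\dagger\,|\det T|$ yields the formula. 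You instead bypass the moment matrix entirely and appeal directly to the tight-frame Lemma~\ref{2dimtightframe} to evaluate $\int_D (r\cdot x)^2\,dx$ by averaging over rotations. The two computations encode the same symmetry (indeed, the tight-frame identity and the moment-matrix-is-scalar fact are equivalent), but your route is arguably more in keeping with the paper's overall ``Method of Rotations and Tight Frames'' theme, while the paper's version has the advantage of isolating the moment-matrix identity \eqref{momentidentity} for reuse in the later Lemma~\ref{HSnormcombined}.
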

\begin{proof}[Proof of Lemma~\ref{2dimHSnorm}]
The centroid of $D$ lies at the origin, in view of the rotational symmetry of $D$. Thus the centroid of $TD$ also lies at the origin.

The moment matrix of $D$ is defined to be $M(D)=[\int_D x_j x_k \, dx]_{j,k}$. We show it equals a scalar multiple of the identity, as follows. Let $U$ denote the matrix for rotation by $2\pi/N$. The rotational invariance of $D$ under $U$ implies that $M(D)=UM(D)U^\dagger$, so that if $x$ is an eigenvector of $M(D)$ then so is $Ux$, with the same eigenvalue. Since $x$ and $Ux$ span $\R^2$ (using here that $N \geq 3$), we conclude every vector in $\R^2$ is an eigenvector with that same eigenvalue. Thus $M(D)$ is a multiple of the identity.

In particular, the diagonal entries in $M(D)$ are equal. Since they sum to the moment of inertia $I(D)$, we have
\begin{equation} \label{momentidentity}
M(D) = \frac{1}{2} I(D)
\begin{pmatrix}
1 & 0 \\ 0 & 1
\end{pmatrix} .
\end{equation}

The moment of inertia of $T(D)$ can now be computed as
\begin{align}
I(TD)
& = \tr M(TD) \notag \\
& = \tr T M(D) T^\dagger |\det T| \notag \\
& =\frac{1}{2} I(D) \, \big( \tr T T^\dagger \big) |\det T| \qquad \text{by \eqref{momentidentity}} \notag \\
& = \frac{1}{2} I(D) \, \lVert T \rVert_{HS}^2 \, |\det T| . \label{momenteq1}
\end{align}
This formula gives us the Hilbert--Schmidt norm of $T$, whereas we want the Hilbert--Schmidt norm of $T^{-1}$. Fortunately, the two are related, with
\begin{equation} \label{inverseHS}
\lVert T^{-1} \rVert_{HS}^2 = \lVert T \rVert_{HS}^2 / |\det T|^2
\end{equation}
by the explicit formula for $T^{-1}$ in terms of the matrix entries, in two dimensions. Hence
\begin{equation} \label{2dimHSinverse}
I(TD)
= \frac{1}{2} I(D) \, \lVert T^{-1} \rVert_{HS}^2 \, |\det T|^3 ,
\end{equation}
from which the lemma follows easily.
\end{proof}

An interesting consequence of the last lemma is that the moment of inertia of a linear image of a rotationally symmetric domain equals the moment of inertia of its \emph{inverse} image, after normalizing by the area.
\begin{lemma} \label{2diminvariance}
If the bounded plane domain $D$ has rotational symmetry of order $N \geq 3$, and $T$ is an invertible $2 \times 2$ matrix, then
\[
\frac{I}{A^2}(TD) = \frac{I}{A^2}(T^{-1}D) .
\]
\end{lemma}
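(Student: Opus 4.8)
The plan is to reduce everything to two formulas already established inside the proof of Lemma~\ref{2dimHSnorm}: the moment identity \eqref{2dimHSinverse} and the two-dimensional relation \eqref{inverseHS} between $\lVert T \rVert_{HS}$ and $\lVert T^{-1} \rVert_{HS}$. Since the hypothesis concerns the same domain $D$ throughout, Lemma~\ref{2dimHSnorm} applies equally with $T$ replaced by $T^{-1}$, so both $TD$ and $T^{-1}D$ are within reach.

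First I would record the area scalings $A(TD) = |\det T|\,A(D)$ and $A(T^{-1}D) = |\det T|^{-1}\,A(D)$, together with \eqref{2dimHSinverse} applied to $T$ and to $T^{-1}$:
\[
I(TD) = \tfrac12 I(D)\,\lVert T^{-1} \rVert_{HS}^2\,|\det T|^3 , \qquad
I(T^{-1}D) = \tfrac12 I(D)\,\lVert T \rVert_{HS}^2\,|\det T|^{-3} .
\]
Dividing each by the corresponding value of $A^2$ gives
\[
\frac{I}{A^2}(TD) = \frac{I(D)}{2 A(D)^2}\,\lVert T^{-1} \rVert_{HS}^2\,|\det T| , \qquad
\frac{I}{A^2}(T^{-1}D) = \frac{I(D)}{2 A(D)^2}\,\lVert T \rVert_{HS}^2\,|\det T|^{-1} .
\]
These two expressions agree precisely because \eqref{inverseHS}, namely $\lVert T^{-1} \rVert_{HS}^2 = \lVert T \rVert_{HS}^2 / |\det T|^2$, rearranges to $\lVert T^{-1} \rVert_{HS}^2\,|\det T| = \lVert T \rVert_{HS}^2\,|\det T|^{-1}$. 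That finishes the proof.

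There is no real obstacle; the one point worth flagging is that the statement is genuinely two-dimensional, since it rests on \eqref{inverseHS} — in two dimensions the cofactor matrix of $T$ is an isometric image of $T$, so $\lVert T^{-1} \rVert_{HS}$ is, up to the determinant, just $\lVert T \rVert_{HS}$. In higher dimensions the analogous invariance fails, which is consistent with the discussion of ``reciprocal'' domains in the introduction.
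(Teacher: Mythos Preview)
Your proof is correct and follows essentially the same route as the paper: both reduce $I/A^2$ on each side to $\tfrac{1}{2}\,\frac{I(D)}{A(D)^2}$ times a Hilbert--Schmidt factor, and then identify the two factors via the two-dimensional relation \eqref{inverseHS}. The only cosmetic difference is that the paper cites \eqref{momenteq1} for the $TD$ side and \eqref{2dimHSinverse} (with $T \mapsto T^{-1}$) for the $T^{-1}D$ side, landing directly on the common expression $\lVert T \rVert_{HS}^2/|\det T|$, whereas you apply \eqref{2dimHSinverse} to both sides and then invoke \eqref{inverseHS} explicitly.
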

\begin{proof}[Proof of Lemma~\ref{2diminvariance}]
By \eqref{momenteq1}, and then using \eqref{2dimHSinverse} with $T$ replaced by $T^{-1}$, we find
\[
\frac{I(TD)}{A(TD)^2}
= \frac{1}{2} \frac{I(D)}{A(D)^2} \frac{\lVert T \rVert_{HS}^2}{|\det T|} = \frac{I(T^{-1} D)}{A(T^{-1} D)^2} .
\]
\end{proof}
The lemma holds also with $T^{-\dagger} D$ instead of $T^{-1}D$, since $T^\dagger$ and $T$ have the same Hilbert--Schmidt norm and determinant.

\section{\bf Proof of Theorem~\ref{2dimDN}} \label{2dimDN_proof}

We prove the Dirichlet case of the theorem. The idea is to construct trial functions on the domain $T(D)$ by linearly transplanting eigenfunctions of $D$, and then to average with respect to the rotations of $D$. The Neumann proof is identical, except using Neumann eigenfunctions.

Let $u_1,u_2,u_3,\ldots$ be orthonormal eigenfunctions on $D$ corresponding to the Dirichlet eigenvalues $\lambda_1,\lambda_2,\lambda_3,\ldots$. Consider an orthogonal matrix $U \in O(2)$ that fixes $D$, so that $U(D)=D$. Define trial functions
\[
v_j = u_j \circ U \circ T^{-1}
\]
on the domain $E=T(D)$, noting $v_j \in H^1_0(E)$ because $u_j \in H^1_0(D)$.

The functions $v_j$ are pairwise orthogonal, since
\[
\int_E v_j v_k \, dx = \int_D u_j u_k \, dx \cdot |\det TU^{-1}| = 0
\]
when $j \neq k$. Thus by the Rayleigh--Poincar\'{e} principle, we have
\begin{equation} \label{rayleighprinc}
\sum_{j=1}^n \lambda_j(E) \leq \sum_{j=1}^n \frac{\int_E |\nabla v_j|^2 \, dx}{\int_E v_j^2 \, dx} .
\end{equation}
For each function $v=v_j$ we evaluate the Rayleigh quotient as
\begin{align}
\frac{\int_E |\nabla v|^2 \, dx}{\int_E v^2 \, dx}
& = \frac{\int_D |(\nabla u)(x)UT^{-1}|^2 \, dx \cdot |\det TU^{-1}|}{\int_D u^2 \, dx \cdot |\det TU^{-1}|} \notag \\
& = \int_D |(\nabla u) UT^{-1}|^2 \, dx , \label{rayleigh}
\end{align}
where the gradient $\nabla u$ is regarded as a row vector. In the last line we used that $u=u_j$ is normalized in $L^2(D)$.

Since $D$ has $N$-fold rotational symmetry for some $N \geq 3$, we may choose $U$ to be the matrix $U_m$ representing rotation by angle $2\pi m/N$, for $m=1,\ldots,N$. By averaging \eqref{rayleighprinc} and \eqref{rayleigh} over these rotations we find
\begin{align*}
\sum_{j=1}^n \lambda_j(E)
& \leq \sum_{j=1}^n \int_D \Big\{ \frac{1}{N} \sum_{m=1}^N |(\nabla u_j) U_m T^{-1}|^2 \Big\} \, dx \\
& = \sum_{j=1}^n \int_D \Big\{ \frac{1}{2} |\nabla u_j|^2 \lVert T^{-1} \rVert_{HS}^2 \Big\} \, dx \qquad \text{by Lemma~\ref{tightframematrix}} \\
& = \frac{1}{2} \lVert T^{-1} \rVert_{HS}^2 \sum_{j=1}^n \lambda_j(D) ,
\end{align*}
which proves the inequality in Theorem~\ref{2dimDN}.

\subsubsection*{Equality statement for Dirichlet fundamental tone, $n=1$.}
Suppose equality holds in the theorem for the first Dirichlet eigenvalue. That is, suppose
\begin{equation} \label{fundeq}
\lambda_1 \big( T(D) \big) = \frac{1}{2} \lVert T^{-1} \rVert_{HS}^2 \, \lambda_1(D) .
\end{equation}

We reduce to $T$ being diagonal, as follows. The singular value decomposition of $T$ can be written $T=QRS$ where $Q$ and $S$ are orthogonal matrices with $\det S=1$ (so that $S$ is a rotation matrix) and  $R=\left( \begin{smallmatrix} r_1 & 0 \\ 0 & r_2 \end{smallmatrix} \right)$ is diagonal with $r_1 , r_2 > 0$. If $r_1=r_2$ then $T$ is a scalar multiple of an orthogonal matrix. So suppose from now on that $r_1 \neq r_2$.

Write $\widetilde{D}=S(D)$, so that $\widetilde{D}$ has rotational symmetry of order $N$. Note $\lambda_1(D)=\lambda_1(\widetilde{D})$, and that $T(D)=QR(\widetilde{D})$ so that
\[
\lambda_1 \big( T(D) \big) = \lambda_1 \big( R(\widetilde{D}) \big) .
\]
Also
\[
\lVert T^{-1} \rVert_{HS}^2 = \lVert R^{-1} \rVert_{HS}^2 = r_1^{-2} + r_2^{-2} .
\]
Hence equality in \eqref{fundeq} implies
\[
\lambda_1 \big( R(\widetilde{D}) \big)= \frac{1}{2} (r_1^{-2} + r_2^{-2}) \lambda_1(\widetilde{D}) ,
\]
which means that equality holds in \eqref{DNeq} for the domain $\widetilde{D}$ under the diagonal linear transformation $R$.

Write $u=u_1$ for a first Dirichlet eigenfunction on $\widetilde{D}$, so that
\begin{equation} \label{linear1}
u_{x_1 x_1} + u_{x_2 x_2} = -\lambda_1(\widetilde{D}) u .
\end{equation}
Inspecting the proof of the theorem, above, we see that one of the trial functions on $R(\widetilde{D})$ is $v=u \circ R^{-1}$, in other words $v(x_1,x_2)=u(x_1/r_1,x_2/r_2)$. Since equality holds in the Rayleigh principle \eqref{rayleighprinc} with $n=1$, we deduce that this trial function must actually be a first eigenfunction on $R(\widetilde{D})$. That is,
\[
\Delta v = -\lambda_1 \big( R(\widetilde{D}) \big) v ,
\]
which means
\begin{equation} \label{linear2}
r_1^{-2} u_{x_1 x_1} + r_2^{-2} u_{x_2 x_2} = -\frac{1}{2}(r_1^{-2}+r_2^{-2}) \lambda_1(\widetilde{D}) u .
\end{equation}
By solving the simultaneous linear equations \eqref{linear1} and \eqref{linear2} (which is possible since $r_1 \neq r_2$) we find that
\begin{equation} \label{eigen}
u_{x_1 x_1} = u_{x_2 x_2} = -\frac{1}{2} \lambda_1(\widetilde{D}) u .
\end{equation}
This last formula must apply also if we rotate $u$ through angle $2\pi/N$, because that rotate of $u$ was used in one of the trial functions in the proof of the theorem above. Hence the second directional derivative of $u$ in direction $\theta=2\pi/N$ must equal $-\frac{1}{2} \lambda_1(\widetilde{D}) u$. That second derivative is
\[
(\cos^2 \theta) u_{x_1 x_1} + 2(\cos \theta \sin \theta) u_{x_1 x_2} + (\sin^2 \theta) u_{x_2 x_2} ,
\]
which equals $-\frac{1}{2} \lambda_1(\widetilde{D}) u + \sin (2\theta) u_{x_1 x_2}$ by \eqref{eigen}. We conclude $\sin (2\theta) u_{x_1 x_2}=0$.

Suppose $N \neq 4$. Then $\sin (2\theta) = \sin(4\pi/N) \neq 0$, and so $u_{x_1 x_2}=0$ in $\widetilde{D}$. Then $u=F_1(x_1)+F_2(x_2)$ for some functions $F_1$ and $F_2$, and substituting this formula into \eqref{eigen} gives that $F_1^{\prime \prime}(x_1)=-\frac{1}{2} \lambda_1(\widetilde{D})[F_1(x_1)+F_2(x_2)]$. Taking the $x_2$ derivative shows that $F_2$ is constant. Similarly $F_1$ is constant, and so $u$ is constant, an impossibility.

Therefore $N=4$. Write $\omega=\sqrt{\lambda_1(\widetilde{D})/2}$. The equations \eqref{eigen} say $u_{x_1 x_1} = u_{x_2 x_2} = - \omega^2 u$, and so
\begin{align*}
u(x_1,x_2) & = A \cos(\omega x_1) \cos(\omega x_2) + B \sin(\omega x_1) \sin(\omega x_2) \\
& \quad + C \cos(\omega x_1) \sin(\omega x_2) + D_* \sin(\omega x_1) \cos(\omega x_2)
\end{align*}
on $\widetilde{D}$, for some constants $A,B,C,D_*$. The $4$-fold rotational symmetry of the domain further implies that each of the four terms
\begin{align}
A & \cos(\omega x_1) \cos(\omega x_2) \label{term1} \\
B & \sin(\omega x_1) \sin(\omega x_2) \label{term2} \\
(C^2+D_*^2) & \cos(\omega x_1) \sin(\omega x_2) \label{term3} \\
(C^2+D_*^2) & \sin(\omega x_1) \cos(\omega x_2) \label{term4}
\end{align}
is by itself a Dirichlet mode for $\widetilde{D}$ with eigenvalue $2\omega^2 = \lambda_1(\widetilde{D})$, or else is identically zero, as we will now show. First, by adding and subtracting $u(x_1,x_2)$ and $u(-x_1,-x_2)$ (its rotation by $\pi$) we find that the functions
\[
f(x_1,x_2) = A \cos(\omega x_1) \cos(\omega x_2) + B \sin(\omega x_1) \sin(\omega x_2)
\]
and
\[
g(x_1,x_2) = C \cos(\omega x_1) \sin(\omega x_2) + D_* \sin(\omega x_1) \cos(\omega x_2)
\]
are each eigenfunctions on $\widetilde{D}$ (or else are identically zero). By adding and subtracting $f(x_1,x_2)$ and $f(-x_2,x_1)$ (rotation by $\pi/2$) we find that \eqref{term1} and \eqref{term2} are each eigenfunctions (or else are identically zero). By considering $Cg(x_1,x_2)-D_*g(-x_2,x_1)$ and $D_*g(x_1,x_2)+Cg(-x_2,x_1)$ we learn that \eqref{term3} and \eqref{term4} are each eigenfunctions (or else are identically zero).

The fundamental Dirichlet mode does not change sign. The nodal domains for each of the functions \eqref{term1}--\eqref{term4} are squares, and so $\widetilde{D}$ must lie within one of those squares. For \eqref{term2}, \eqref{term3} and \eqref{term4}, rotation by angle $\pi$ maps each nodal square to a completely disjoint square, which means that $\widetilde{D}$ cannot have $2$-fold rotational symmetry, let alone $4$-fold symmetry. Hence \eqref{term2}, \eqref{term3} and \eqref{term4} must not be eigenfunctions, and so necessarily $B=C=D_*=0$. Thus the eigenfunction is \eqref{term1}. Taking $A=1$, we have
\[
u = \cos(\omega x_1) \cos(\omega x_2) .
\]
Rotation by $\pi$ rules out every nodal square except the one centered at the origin, which is $(-\pi/2\omega,\pi/2\omega)^2$. Hence $\widetilde{D}$ is contained in this square.

The square has first Dirichlet eigenvalue $2\omega^2$, which equals $\lambda_1(\widetilde{D})$. Thus $\widetilde{D}$ must fill the whole square (except perhaps omitting a set of capacity zero, which does not affect the fundamental tone \cite{GZ94}). Then $R(\widetilde{D})$ is a rectangle, and $D=S^{-1}(\widetilde{D})$ is a square and $T(D)=QR(\widetilde{D})$ is a rectangle. This completes the proof of the ``only if'' part of the proof of the equality statement.

For the ``if'' part of the equality statement, suppose $D$ is a square and $T(D)$ is a rectangle (possibly with sets of capacity zero removed). By rotating and reflecting $D$ and $T(D)$ suitably, we can suppose they have sides parallel to the coordinate axes and that $T=\left( \begin{smallmatrix} r_1 & 0 \\ 0 & r_2 \end{smallmatrix} \right)$ for some $r_1,r_2>0$. Writing $L$ for the side length of the square, we have
\[
\lambda_1(D) = 2(\pi/L)^2 , \qquad \lambda_1 \big( T(D) \big) = (\pi/r_1 L)^2 + (\pi/r_2 L)^2 = \frac{1}{2} \lVert T^{-1} \rVert_{HS}^2 \lambda_1(D) ,
\]
so that equality holds in \eqref{DNeq} with $n=1$.

\subsubsection*{Equality statement for second Neumann eigenvalue, $n=2$.}
Suppose equality holds in the theorem for the second Neumann eigenvalue. Most of the preceding argument in the Dirichlet equality case applies without change, simply replacing $\lambda_1$ with $\mu_2$ and the Dirichlet eigenfunction $u_1$ with the Neumann eigenfunction $u_2$, and replacing the word ``Dirichlet'' with ``Neumann''. The argument works because the first Neumann eigenvalue of $\widetilde{D}$ is zero, with constant eigenfunction $u_1 \equiv \text{const.}$, and so the trial function $v_1 = u_1 \circ U \circ R^{-1}$ is also constant and hence is a first eigenfunction on $R(\widetilde{D})$. Thus if equality holds in the Rayleigh principle \eqref{rayleighprinc} for $n=2$ then the trial function $v_2 = u_2 \circ U \circ R^{-1}$ is a second eigenfunction on $R(\widetilde{D})$.

The significant difference from the Dirichlet proof begins at the sentence ``The fundamental Dirichlet mode does not change sign.'' The second Neumann eigenfunction $u=u_2$ does change sign on $\widetilde{D}$: it has exactly two nodal domains $\{ u>0 \}$ and $\{ u<0 \}$, each of which is connected. (We know the eigenfunction has at least two nodal domains because $u$ is orthogonal to the constant eigenfunction; it has at most two by Courant's nodal domain theorem \cite[p.~112]{B80}.)

Consider each of the four possible forms of $u$ in turn, namely \eqref{term1}--\eqref{term4}. Each one has square nodal domains, and the two nodal domains of $u$ in $\widetilde{D}$ must be subsets of such squares. Hence $\widetilde{D}$ intersects exactly two of the squares. At the same time, $\widetilde{D}$ has $4$-fold rotational symmetry. These requirements prevent \eqref{term1} from being an eigenfunction for $\widetilde{D}$, because if $\widetilde{D}$ intersected two of the nodal squares, then it would have to intersect at least five of them. Hence $A=0$. Similarly \eqref{term2} cannot be an eigenfunction, and so $B=0$.

Next we deal with \eqref{term3}. (The argument is similar for \eqref{term4}.) Suppose $C^2+D_*^2 >0$ in \eqref{term3}, so that we may take
\begin{equation} \label{almostthere}
u = \cos(\omega x_1) \sin(\omega x_2) .
\end{equation}
Then in order for $\widetilde{D}$ to intersect exactly two of the nodal squares, they must be the squares adjacent to the origin, so that
\begin{equation} \label{squaredomain}
\widetilde{D} \subset (-\pi/2\omega,\pi/2\omega) \times (-\pi/\omega,\pi/\omega).
\end{equation}
We will deduce a contradiction below, so that necessarily $C^2+D_*^2=0$. Hence none of the functions \eqref{term1}--\eqref{term4} is an eigenfunction, and so the case $N=4$ cannot occur. Therefore the only way for equality to hold is to have $r_1=r_2$, so that $T$ is a scalar multiple of an orthogonal matrix.

To obtain the desired contradiction, we will examine how the Neumann boundary condition is affected by the linear transformation. Since the domain $\widetilde{D}$ has Lipschitz boundary, there exists an outward normal vector $(n_1,n_2)$ at almost every boundary point (with respect to arclength measure). At each such point $(x_1,x_2) \in \partial \widetilde{D}$, we know $u$ satisfies the Neumann (or natural) boundary condition
\[
0 = \nabla u \cdot (n_1,n_2) = u_{x_1}n_1 + u_{x_2}n_2 ;
\]
here we used that $u$ as defined by \eqref{almostthere} is globally smooth.
Further, the point $(r_1 x_1, r_2 x_2) \in \partial R(\widetilde{D})$ has an outward normal vector $(n_1/r_1,n_2/r_2)$. Since $v(x_1,x_2)=u(x_1/r_1,x_2/r_2)$ is a smooth eigenfunction on the closure of $R(\widetilde{D})$, it satisfies the Neumann boundary condition:
\[
0 = \nabla v \cdot (n_1/r_1,n_2/r_2) = u_{x_1}n_1/r_1^2 + u_{x_2}n_2/r_2^2 .
\]
Recalling that $r_1 \neq r_2$, these simultaneous equations imply
\[
u_{x_1} n_1 = 0 \qquad \text{and} \qquad u_{x_2}n_2 = 0 .
\]
Hence either $u_{x_1}=0$ or $u_{x_2}=0$, a.e.\ on $\partial \widetilde{D}$. Recalling the formula \eqref{almostthere} for $u$ and the constraint \eqref{squaredomain} on $\widetilde{D}$, we deduce that almost every boundary point is contained in the lines $\{ x_1 = 0, \pm \pi/2\omega \}, \{ x_2 = 0, \pm \pi/2\omega, \pm \pi/\omega \}$. Furthermore, we can rule out the vertical lines $\{ x_1 = \pm \pi/2\omega \}$ because on those lines $u_{x_1} \neq 0$ and so $n_1 = 0$, which means the normal would be vertical and the tangent horizontal, so that the boundary would depart the given vertical lines. Similarly we rule out the horizontal lines $\{ x_2 = 0, \pm \pi/\omega \}$. Hence the boundary of $\widetilde{D}$ must lie in the union of the lines $\{ x_1 = 0 \}, \{ x_2 = \pm \pi/2\omega \}$. Since these lines fail to bound a domain, we have arrived at a contradiction, as desired.

\subsection*{Why did P\'{o}lya not prove our theorem?} P\'{o}lya proved Theorem~\ref{2dimDN} for the first Dirichlet eigenvalue $\lambda_1$, except that he proved no equality statement. His result appeared in \cite{P52} and its proof in \cite[Chapter IV]{PS53}.

Why did he not prove the theorem for sums of eigenvalues, or for Neumann eigenvalues, as we do in this paper? Or for higher dimensions as we do in a forthcoming paper \cite{LS11c}?

A possible reason is that our method is subtly different from P\'{o}lya's. We use rotational symmetry at a later stage in the argument. This delay permits us to handle more than just the first eigenvalue, and to handle Neumann eigenvalues too. Let us explain in more detail. P\'{o}lya began by using rotational symmetry of the domain to obtain rotational symmetry of the fundamental Dirichlet eigenfunction $u_1$: he observed that the rotate of $u_1$ is itself a  positive eigenfunction and so must equal $u_1$. Then P\'{o}lya deduced that
\[
\int_D \left( \frac{\partial u_1}{\partial x_1} \right)^{\! \! 2} dx = \int_D \left( \frac{\partial u_1}{\partial x_2} \right)^{\! \! 2} dx = \frac{1}{2} \int_D |\nabla u_1|^2 \, dx , \qquad \int_D \frac{\partial u_1}{\partial x_1} \frac{\partial u_1}{\partial x_2} \, dx = 0 .
\]
Hence his linearly transplanted trial function $u_1 \circ T^{-1}$ has Rayleigh quotient
\[
\frac{\int_E |\nabla (u_1 \circ T^{-1})|^2 \, dx}{\int_E (u_1 \circ T^{-1})^2 \, dx} = \frac{\int_D |(\nabla u_1)(x)T^{-1}|^2 \, dx}{\int_D u_1^2 \, dx} = \frac{1}{2} \lVert T^{-1} \rVert_{HS}^2 \frac{\int_D |\nabla u_1|^2 \, dx}{\int_D u_1^2 \, dx}
\]
as desired.

The difficulty when trying to extend P\'{o}lya's approach to sums of eigenvalues is that the higher eigenfunctions are usually \emph{not} symmetric under rotations, because of sign changes. The insight that permits us to prove Theorem~\ref{2dimDN} is that while the rotate of a higher eigenfunction need not equal itself, it must still be an eigenfunction with the same eigenvalue, and thus can still be used to generate trial functions by linear transplantation. Our proof uses the whole family of rotations to generate many trial functions, and then averages over the resulting family of inequalities. This approach applies (without change!) to the Neumann eigenvalues too.

\section{\bf Proofs of other results} \label{2dimproofs}

\subsection*{Proof of Corollary~\ref{regularpolyDN}}
Every triangle can be written (after translation) as the image under a linear transformation $T$ of an equilateral triangle centered at the origin . Hence the inequality for triangles in Corollary~\ref{regularpolyDN} follows from Theorem~\ref{2dimDN} and Lemma~\ref{2dimHSnorm}. The statements about parallelograms and ellipses are proved similarly.

\subsubsection*{Remark on Dirichlet maximizers when $n \geq 2$.} It is not clear how to determine all  maximizing domains for sums of eigenvalues beyond the first. For example, some (but not all) non-square rectangles can maximize $(\lambda_1+\cdots+\lambda_n)A^3/I$ when $n \geq 2$, as we now show. Consider a rectangle with side lengths $l_1,l_2$, so that the area is $A=l_1l_2$, the moment of inertia is $I=l_1l_2(l_1^2+l_2^2)/12$ and
\[
\frac{A^3}{I} = \frac{12}{l_1^{-2}+l_2^{-2}} .
\]
The fundamental tone is
\[
\lambda_1 = \pi^2(l_1^{-2}+l_2^{-2}) .
\]
Notice $\lambda_1 A^3/I=12\pi^2$ for every rectangle (not just for the square), as we have observed before. Thus every rectangle is a maximizer when $n=1$.

Now fix $n \geq 2$, and fix the side length $l_2$. For $l_1$ sufficiently large, we have eigenvalues $\lambda_j = \pi^2(j^2 l_1^{-2}+l_2^{-2})$ for $j=1,\ldots,n$, and so
\[
\lim_{l_1 \to \infty} (\lambda_1+\cdots+\lambda_n)\frac{A^3}{I} = 12 \pi^2 n .
\]
Meanwhile, the square satisfies
\[
(\lambda_1+\cdots+\lambda_n)\frac{A^3}{I} > n \lambda_1 \frac{A^3}{I} = 12\pi^2 n .
\]
Hence for sufficiently large $l_1$, the rectangle with side lengths $l_1$ and $l_2$ is not a maximizer.

Nonetheless, the rectangle can be a maximizer for some values of $l_1$ and $n$. For example, let $n=3$ and suppose the side lengths of the rectangle satisfy $l_2 \leq l_1 \leq \sqrt{8/3} \, l_2$. Then by simple comparisons we find
\begin{align*}
\lambda_1 & = \pi^2(l_1^{-2}+l_2^{-2}) , \\
\lambda_2 & = \pi^2(2^2l_1^{-2}+l_2^{-2}) , \\
\lambda_3 & = \pi^2(l_1^{-2}+2^2l_2^{-2}) ,
\end{align*}
and so
\[
(\lambda_1+\lambda_2+\lambda_3)\frac{A^3}{I} = 72\pi^2 .
\]
This value is the same as achieved for the square ($l_1=l_2$), and so there are many non-square maximizers when $n=3$.

The idea behind this construction is to identify a range of $(l_1,l_2)$ values for which the eigenvalues $\lambda_1,\lambda_2,\lambda_3$ have values $\pi^2(j^2l_1^{-2}+k^2l_2^{-2})$ for $(j,k)=(1,1),(2,1),(1,2)$. This set of index pairs in $\Z^2$ is invariant with respect to interchanging $j$ and $k$. Hence $\lambda_1+\lambda_2+\lambda_3$ is proportional to $l_1^{-2}+l_2^{-2}$, which allows us to cancel the denominator in $A^3/I$ and obtain an expression independent of the side lengths. This construction can of course be extended to arbitrarily large values of $n$, if desired.

\subsection*{Proof of Theorem~\ref{2dimR}}
The proof goes exactly as for the Dirichlet and Neumann cases in the proof of Theorem~\ref{2dimDN}, except that for the Robin eigenvalues we must take account also of a boundary integral in the Rayleigh quotient:
\begin{align*}
\frac{\int_{\partial E} v^2 \, ds(x)}{\int_E v^2 \, dx}
& = \frac{\int_{\partial E} u(UT^{-1}x)^2 \, ds(x)}{\int_E u(UT^{-1}x)^2 \, dx} \\
& = \frac{\int_{\partial D} u(Ux)^2 |T\tau(x)| \, ds(x)}{\int_D u(Ux)^2 \, dx \cdot |\det T|}
\end{align*}
by $x \mapsto Tx$, where $\tau(x)$ denotes the unit tangent vector to $\partial D$ at $x$. Geometrically, $|T\tau(x)|$ is the factor by which $T$ stretches the tangent direction to $\partial D$ at $x$.

The symmetry of $D$ implies that the tangent vectors rotate according to $\tau(U^{-1}x) = U^{-1} \tau(x)$, and so replacing $x$ with $U^{-1}x$ in the last integral gives
\[
\frac{\int_{\partial E} v^2 \, ds(x)}{\int_E v^2 \, dx}
=  |\det T|^{-1} \int_{\partial D} u(x)^2 |TU^{-1}\tau(x)| \, ds(x) .
\]
Choose $U$ to be the matrix $U_m$ representing rotation by angle $2\pi m/N$, for $m=1,\ldots,N$. Averaging the preceding quantity over $m$ and applying Cauchy--Schwarz gives the upper estimate
\begin{align}
& |\det T|^{-1} \int_{\partial D} u(x)^2 \Big\{ \frac{1}{N} \sum_{m=1}^N  |T U_m^{-1} \tau(x)|^2 \Big\}^{\! \! 1/2} \, ds(x) \label{cs} \\
& = |\det T|^{-1} \, \frac{1}{\sqrt{2}} \lVert T \rVert_{HS} \int_{\partial D} u(x)^2 \, ds(x) \notag
\end{align}
by Lemma~\ref{tightframematrix}, since $|\tau(x)|=1$. Multiplying by $\sigma \lVert T^{-1} \rVert_{HS}/\sqrt{2}$ gives
\[
\frac{1}{2} \lVert T^{-1} \rVert_{HS}^2 \, \sigma \int_{\partial D} u(x)^2 \, ds(x)
\]
by \eqref{inverseHS}.

With the aid of this last estimate we can straightforwardly adapt the proof of Theorem~\ref{2dimDN} to the Robin situation, and then call on Lemma~\ref{2dimHSnorm} to interpret the Hilbert--Schmidt norm of $T^{-1}$ in terms of moment of inertia.

\subsubsection*{Equality statement for Robin fundamental tone, $n=1$.} The proof of the equality statement follows the Dirichlet case in Theorem~\ref{2dimDN} up to the point where $N=4$ and
\[
u = \cos(\omega x_1) \cos(\omega x_2) ,
\]
and $\widetilde{D}$ contained in the open square ${\mathcal S} = (-\pi/2\omega,\pi/2\omega)^2$. We want to deduce a contradiction, so that the only way for equality to hold when $n=1$ is for $T$ to be a scalar multiple of an orthogonal matrix.

Equality must hold in the application of Cauchy--Schwarz at \eqref{cs}, except using $R$ instead of $T$ and using $\widetilde{D}$ instead of $D$. Hence
\begin{equation} \label{csequal}
|RU_1^{-1} \tau(x)|  = |RU_2^{-1} \tau(x)| = |RU_3^{-1} \tau(x)| = |RU_4^{-1} \tau(x)|
\end{equation}
for almost every (with respect to arclength measure) $x \in {\mathcal S} \cap \partial \widetilde{D}$; here we use that $u(x)^2>0$ on ${\mathcal S}$.

Consider such an $x$-value and write $\tau_1$ and $\tau_2$ for the components of the tangent vector $\tau(x)$. Then $|R \left( \begin{smallmatrix} \tau_1 \\ \tau_2 \end{smallmatrix} \right)|= |R \left( \begin{smallmatrix} -\tau_2 \\ \tau_1 \end{smallmatrix} \right)|$ by \eqref{csequal}, or
\[
(r_1 \tau_1)^2+(r_2 \tau_2)^2=(-r_1 \tau_2)^2+(r_2 \tau_1)^2 .
\]
Since $r_1^2 \neq r_2^2$, we can simplify to $\tau_1^2 = \tau_2^2$. Thus the tangent line at $x$ has slope $\pm 1$, and hence so does the normal vector.

The four possible normal vectors are $n(x)=(\e_1,\e_2)/\sqrt{2}$ where $\e_1,\e_2 \in \{ -1,1 \}$. Thus the Robin boundary condition $\frac{\partial u}{\partial n} + \sigma u = 0$ says
\[
\e_1 u_{x_1} + \e_2 u_{x_2} + \sqrt{2} \sigma u = 0 .
\]
Substituting $u = \cos(\omega x_1) \cos(\omega x_2)$ yields that
\[
\e_1 \tan(\omega x_1) + \e_2 \tan(\omega x_2) = \sqrt{2} \sigma/\omega .
\]
We conclude that every point $x \in {\mathcal S} \cap \partial \widetilde{D}$ lies on one of these four curves.

These curves have slope $\pm 1$ at only finitely many points in the square ${\mathcal S}$, and so
we conclude that no points of $\partial \widetilde{D}$ lie in that square. Hence $\partial \widetilde{D}$ lies entirely in the boundary of the square ${\mathcal S}$. The Robin condition fails on $\partial {\mathcal S}$, though, because $u=0$ there while $\frac{\partial u}{\partial n} \neq 0$.

This contradiction completes the proof.

\subsection*{Proof of Corollary~\ref{regularpolyR}}
First we show that among all triangles $G$, the quantity
\begin{equation} \label{robinbest}
(\rho_1 + \dots + \rho_n) \Big|_{\, \sigma \sqrt{\frac{I}{A^3}} \, , \, G} \left. \frac{A^3}{I} \right|_G
\end{equation}
is maximal for the equilateral triangle, for each $n \geq 1$. For this, let $D$ be an equilateral triangle centered at the origin, and let $G=T(D)$ where $T$ is an invertible linear transformation. Note that by Lemma~\ref{2dimHSnorm},
\[
\sigma \sqrt{\frac{I(D)}{A(D)^3}} \, \frac{\lVert T^{-1} \rVert_{HS}}{\sqrt{2}} = \sigma \sqrt{\frac{I(TD)}{A(TD)^3}} = \sigma \sqrt{\frac{I(G)}{A(G)^3}} .
\]
Hence replacing $\sigma$ in Theorem~\ref{2dimR} with $\sigma \sqrt{I(D)/A(D)^3}$ proves that expression \eqref{robinbest} is maximal when $T$ is the identity, that is, when $G$ is equilateral.

Next, assume the triangle $G$ has the same area as the equilateral $D$. Then the equilateral has smaller moment of inertia, $I(D) \leq I(G)$, as can be proved from formula \eqref{trianglemoment} for the moment of inertia along with Cauchy--Schwarz and the isoperimetric theorem for triangles. Hence by \eqref{robinbest} the expression
\[
(\rho_1 + \dots + \rho_n) \Big|_{\, \sigma \sqrt{\frac{I(D)}{A(D)^3}} \, , \, G} \left. \frac{A^3}{I} \right|_G
\]
is maximal when $G$ equals the equilateral triangle $D$. (Here we used monotonicity of the Robin eigenvalues with respect to the Robin parameter.) Finally, replacing $\sigma$ with $\sigma \sqrt{A(D)^3/I(D)}$ proves the corollary, for triangles.

Argue similarly for parallelograms and ellipses.

\subsection*{Proof of Theorem~\ref{2dimS}}
The proof proceeds as for the Dirichlet case in Theorem~\ref{2dimDN}, except that we must consider also the potential term in the numerator of the Rayleigh quotient. The key estimate is that
\begin{align*}
\frac{\int_E (W \circ T^{-1}) v^2 \, dx}{\int_E v^2 \, dx}
& = \frac{\int_D (W \circ U^{-1}) u^2 \, dx \cdot |\det TU^{-1}|}{\int_D u^2 \, dx \cdot |\det TU^{-1}|} \\
& = \int_D W u^2 \, dx
\end{align*}
by the rotational symmetry of $W$. The proof is now easily completed.

Incidentally, the assumption in the theorem that the potential $W$ should grow at infinity can be significantly weakened \cite{LSW09}.

\subsubsection*{Equality statement for Schr\"{o}dinger fundamental tone, $n=1$.}
Just like in the Dirichlet case, the singular value decomposition allows us to reduce to $T$ being diagonal. The analogues of equations \eqref{linear1} and \eqref{linear2} are that
\begin{align*}
h(u_{x_1 x_1} + u_{x_2 x_2}) & = \big( \widetilde{W}-{\mathcal E}_1(\widetilde{W}) \big) u , \\
\frac{2h}{r_1^{-2}+r_2^{-2}} \big( r_1^{-2} u_{x_1 x_1} + r_2^{-2} u_{x_2 x_2} \big) & = \big( \widetilde{W} - {\mathcal E}_1(\widetilde{W}) \big) u .
\end{align*}
(These equations hold pointwise a.e.\ by elliptic regularity theory, since the potential is locally bounded \cite[Theorem 8.8]{GT98}.) Solving these simultaneous equations, we deduce (since $r_1 \neq r_2$) that
\begin{equation} \label{eigensch}
u_{x_1 x_1} = u_{x_2 x_2} = \frac{1}{2h} \big( \widetilde{W} - {\mathcal E}_1(\widetilde{W}) \big) u .
\end{equation}
The potential $\widetilde{W}(x)$ is assumed to tend to $\infty$ as $|x| \to \infty$, and so $\widetilde{W}-{\mathcal E}_1>0$ whenever $|x|$ is sufficiently large. Multiplying \eqref{eigensch} by $u$ and integrating in the $x_1$ direction, we deduce that $-\int_\R u_{x_1}^2 \, dx_1 \geq 0$ when $|x_2|$ is sufficiently large, so that $u(x_1,x_2) = 0$ for almost every $x_1$. Since \eqref{eigensch} says that $u$ satisfies the one dimensional wave equation with $x_2$ playing the role of time variable and $x_1$ playing the role of space variable, we conclude that $u = 0$ a.e.\ in $\R^2$. This contradiction completes the proof.

\subsection*{Proof of Theorem~\ref{quadDN}}
We prove a generalization of Theorem~\ref{2dimDN}, namely that
\begin{equation} \label{DNeqquad}
(\lambda_1 + \dots + \lambda_n) \big( TD \big) \leq
\frac{1}{4} \big( \lVert T^{-1}_+ \rVert_{HS}^2 + \lVert T^{-1}_- \rVert_{HS}^2 \big) (\lambda_1 + \dots + \lambda_n) \big( D \big)
\end{equation}
for any bounded $D$ having rotational symmetry of order $N \geq 4$ with $N$ even.

The proof of Theorem~\ref{2dimDN} requires some modifications. First we show that pairwise orthogonality of the $v_j$ remains valid. Decomposing $D$ and $E=T(D)$ into their upper and lower halves $D_\pm = D \cap \R^2_\pm$ and $E_\pm = E \cap \R^2_\pm$, we compute
\[
\int_{E_\pm} v_j v_k \, dx = \int_{D_\pm} u_j u_k \, dx \cdot |\det T_\pm U^{-1}| .
\]
These upper and lower terms sum to zero because $\det T_+ = \det T_-$ and $\int_D u_j u_k \, dx = 0$ by assumption, when $j \neq k$. Thus $\int_E v_j v_k \, dx = 0$.

Next we consider the Rayleigh quotient of $v$. We decompose it as
\begin{equation} \label{decomposed}
\frac{\int_E |\nabla v|^2 \, dx}{\int_E v^2 \, dx}
= \sum_{\pm} \int_{U(D_\pm)} |(\nabla u)(x)UT^{-1}_\pm|^2 \, dx ,
\end{equation}
where in this calculation we use once more that the determinants of $T_+$ and $T_-$ agree.

Since $N$ is even, $U_{N/2}$ represents rotation by $\pi$, so that $U_{m+N/2}(D_\pm)=U_m(D_\mp)$ and $U_{m+N/2}=-U_m$. Hence when we average \eqref{decomposed} over the rotations $U=U_m$ we obtain
\begin{align*}
& \frac{1}{N} \sum_{m=1}^N \sum_\pm \int_{U_m(D_\pm)}  |(\nabla u)(x)U_mT^{-1}_\pm|^2 \, dx \\
& =  \sum_\pm \frac{1}{N} \sum_{m=1}^{N/2} \Big( \int_{U_m(D_\pm)} + \int_{U_m(D_\mp)}  \Big)  |(\nabla u)(x)U_mT^{-1}_\pm|^2 \, dx \\
& = \sum_\pm \int_D \frac{1}{N} \sum_{m=1}^{N/2}  |(\nabla u)(x)U_mT^{-1}_\pm|^2 \, dx \qquad \text{since $U_m(D)=D$} \\
& = \sum_\pm \int_D \frac{1}{2N} \sum_{m=1}^N |(\nabla u)(x)U_mT^{-1}_\pm|^2 \, dx \\
& = \sum_\pm \frac{1}{4} \lVert T^{-1}_\pm \rVert_{HS}^2 \int_D |\nabla u|^2 \, dx
\end{align*}
by Lemma~\ref{tightframematrix}. Now complete the proof of \eqref{DNeqquad} by recalling $u=u_j$ and summing over $j$.

Then the theorem follows from \eqref{DNeqquad} and the evaluation of the Hilbert--Schmidt norms in the next lemma.
\begin{lemma} \label{HSnormcombined}
Let $T$ be the piecewise linear homeomorphism in Theorem~\ref{quadDN}. If the bounded plane domain $D$ has rotational symmetry of order $N \geq 4$, with $N$ even, then
\[
\frac{1}{4} \left( \lVert T^{-1}_+ \rVert_{HS}^2 + \lVert T^{-1}_- \rVert_{HS}^2 \right) = \frac{I_0}{A^3}(TD) \Big/ \frac{I_0}{A^3}(D) .
\]
\end{lemma}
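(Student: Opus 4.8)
The plan is to adapt the proof of Lemma~\ref{2dimHSnorm} to the piecewise-linear setting, the one genuinely new ingredient being a moment computation for each of the two halves of $D$ separately. Write $D_\pm = D \cap \R^2_\pm$ for the upper and lower halves of $D$ (which overlap only in the $x_1$-axis, a null set), and let $M(D_\pm) = [\int_{D_\pm} x_j x_k \, dx]_{j,k}$ be their moment matrices, so that $M(D) = M(D_+) + M(D_-)$. Since $N$ is even, rotation by $\pi$ (namely $U_{N/2}$) fixes $D$, and hence $-D_+ = D_-$ up to a null set; because the substitution $x \mapsto -x$ leaves each integrand $x_j x_k$ invariant, this gives $M(D_+) = M(D_-)$, so $M(D_\pm) = \tfrac12 M(D)$. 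On the other hand the rotational symmetry of order $N \ge 3$ gives, exactly as in the derivation of \eqref{momentidentity} (and using that the centroid of $D$ lies at the origin, so $I(D) = I_0(D)$), that $M(D)$ equals $\tfrac12 I_0(D)$ times the identity matrix. Combining the two facts, $M(D_\pm) = \tfrac14 I_0(D)$ times the identity. This is the step that uses both hypotheses — $N \ge 3$ for the moment matrix to be scalar, $N$ even for the two halves to be related by $x \mapsto -x$ — and it is the one I expect to require the most care to state cleanly.

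With this in hand the remainder is a change of variables on each half. Since $T_\pm$ maps $\R^2_\pm$ onto itself, $TD$ is, up to a null set, the disjoint union of $T_+D_+$ and $T_-D_-$, so arguing as in \eqref{momenteq1},
\[
I_0(TD) = \sum_\pm \int_{T_\pm D_\pm} |x|^2 \, dx = \sum_\pm |\det T_\pm| \, \tr\!\big(T_\pm M(D_\pm) T_\pm^\dagger\big) = \sum_\pm |\det T_\pm| \cdot \tfrac14 I_0(D) \lVert T_\pm \rVert_{HS}^2 .
\]
Using $\det T_+ = \det T_-$ this becomes $I_0(TD) = \tfrac14 I_0(D) \, |\det T_+| \big( \lVert T_+ \rVert_{HS}^2 + \lVert T_- \rVert_{HS}^2 \big)$. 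The same change of variables, applied to the constant function $1$ on each half, gives $A(TD) = |\det T_+| \, A(D)$.

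Finally I divide. Writing $\delta = |\det T_+| = |\det T_-|$,
\[
\frac{I_0}{A^3}(TD) \Big/ \frac{I_0}{A^3}(D) = \frac{\lVert T_+ \rVert_{HS}^2 + \lVert T_- \rVert_{HS}^2}{4\delta^2} ,
\]
and by the two-dimensional identity \eqref{inverseHS} applied to $T_+$ and $T_-$, $\lVert T_\pm^{-1} \rVert_{HS}^2 = \lVert T_\pm \rVert_{HS}^2 / \delta^2$, so the right-hand side equals $\tfrac14 \big( \lVert T_+^{-1} \rVert_{HS}^2 + \lVert T_-^{-1} \rVert_{HS}^2 \big)$, which is the claim. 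No obstacle beyond bookkeeping remains once the half-domain moment identity is established; in particular the hypothesis $\det T_+ = \det T_-$ is precisely what makes both $I_0(TD)$ and $A(TD)$ factor through a single determinant and lets the ratio collapse.
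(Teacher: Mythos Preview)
Your proof is correct and follows essentially the same route as the paper's: you establish the half-domain moment identity $M(D_\pm)=\tfrac14 I_0(D)\,\mathrm{Id}$ from evenness of $N$ together with \eqref{momentidentity}, then change variables on each half, factor out the common determinant, and invoke \eqref{inverseHS}. The only cosmetic difference is that you spell out the area computation $A(TD)=|\det T_+|\,A(D)$ and the final division explicitly, whereas the paper compresses these into ``from which the lemma follows.''
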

Recall $I_0$ denotes the moment of inertia about the origin.
\begin{proof}[Proof of Lemma~\ref{HSnormcombined}]
The moment integrals over the upper and lower halves of $D$ agree, with
\[
\int_{D_+} x_j x_k \, dx = \int_{D_-} x_j x_k \, dx , \qquad j,k =1,2 ,
\]
because $D_+$ maps to $D_-$ under rotation by $\pi$ (that is, $x \mapsto -x$). Here we use evenness of the order of rotation.

Hence the moment matrices satisfy $M(D_+)=M(D_-)=M(D)/2$. Since $M(D) = \frac{1}{2} I_0(D) \left( \begin{smallmatrix} 1 & 0 \\ 0 & 1 \end{smallmatrix} \right)$ , as shown in the proof of Lemma~\ref{2dimHSnorm}, we deduce
\begin{equation} \label{momentplusminus}
M(D_\pm) = \frac{1}{4} I_0(D) \begin{pmatrix} 1 & 0 \\ 0 & 1 \end{pmatrix}
\end{equation}
Now the moment of inertia of $TD$ about the origin can be computed as
\begin{align*}
I_0(TD)
& = \tr M(TD) \\
& = \tr M(T_+D_+) + \tr M(T_-D_-) \\
& = \sum_\pm \Big( \tr T_\pm M(D_\pm) T^\dagger_\pm \Big) \cdot |\det T_\pm| \\
& =\frac{1}{4} I_0(D) \sum_\pm \big( \tr T_\pm T^\dagger_\pm \big) \cdot |\det T_\pm|  \qquad \text{by \eqref{momentplusminus}} \\
& = \frac{1}{4} I_0(D) \left( \lVert T_+ \rVert_{HS}^2 + \lVert T_- \rVert_{HS}^2 \right) |\det T_\pm| ,
\end{align*}
where in the last step we used that $\det T_+ = \det T_-$.

The Hilbert--Schmidt norm of the inverse $T^{-1}_\pm$ is related to the Hilbert--Schmidt norm of $T_\pm$ by \eqref{inverseHS}, and so
\[
I_0(TD) = \frac{1}{4} I_0(D) \big( \lVert T^{-1}_+ \rVert_{HS}^2 + \lVert T^{-1}_- \rVert_{HS}^2 \big) |\det T_\pm|^3 ,
\]
from which the lemma follows.
\end{proof}

\section{\bf Literature on maximizing low eigenvalues under area, perimeter, in-radius or conformal mapping normalization} \label{literature}

This paper gives sharp upper bounds on the sum of the first $n \geq 1$ eigenvalues, normalized by $A^3/I$. To help put these results in context, we now describe results and conjectures that apply to the \emph{low} eigenvalues ($n=1,2,3$).

\subsection*{Dirichlet eigenvalues}
The quantity $\lambda_1A^2/L^2$ (where $L$ is the perimeter) is maximal among triangles for the equilateral triangle, by work of Siudeja \cite{S07}. This result is stronger than P\'{o}lya's upper bound \eqref{lambda1values} on $\lambda_1 A^3/I$, because $AL^2/I=36(l_1+l_2+l_3)^2/(l_1^2+l_2^2+l_3^3)$ by \eqref{trianglemoment} and this ratio is maximal for the equilateral triangle (when $l_1=l_2=l_3$).

Further, the normalized spectral gap $(\lambda_2-\lambda_1)A^2/L^2$ is maximal among triangles for the equilateral, by more recent work of Siudeja \cite{S10}, and thus $\lambda_2 A^2/L^2$ is maximal for the equilateral also. Hence $(\lambda_2-\lambda_1) A^3/I$ and $\lambda_2 A^3/I$ are maximal for the equilateral, which improves on Corollary~\ref{regularpolyDN} for $n=2$.

Among general convex domains, $\lambda_1 A^2/L^2$ is maximal for degenerate rectangles by work of P\'{o}lya \cite{P60}. That result differs from our Conjecture~\ref{conj1} on $\lambda_1 A^3/I$, where the equilateral triangle should also be a maximizer.

Turning now to the indradius $R$, it is easy to see for triangles (or any polygon with an inscribed circle) that $A/L$ is proportional to $R$. Hence the preceding upper bounds for eigenvalues of triangles using $A^2/L^2$ can be restated using a normalizing factor of inradius squared. In particular, $\lambda_1 R^2$ is maximal for the equilateral triangle. A more general result is due to Solynin \cite{S93}: among all $N$-gons with an inscribed circle, $\lambda_1 R^2$ is maximal for the regular $N$-gon. Of course, among general domains the maximizer of $\lambda_1 R^2$ is simply the disk, by domain monotonicity.

For area normalization, Antunes and Freitas \cite[Conj.~6.1]{AF08} conjecture that the Faber--Krahn lower bound on $\lambda_1 A$ has a sharp upper analogue that includes an isoperimetric correction term: they conjecture that among simply connected plane domains,
\[
\lambda_1 A \leq \pi j_{0,1}^2 + \frac{\pi^2}{4} \Big( \frac{L^2}{A} - 4\pi \Big)
\]
with equality for the disk and (in a limiting sense) for degenerate rectangles.

Under a conformal mapping normalization, P\'{o}lya and Schiffer proved lower bounds on sums of \emph{reciprocal} eigenvalues $1/\lambda_1 + \cdots + 1/\lambda_n$, with the disk being extremal \cite{PS53}. Extensions to surfaces with bounded curvature were proved by Bandle \cite[p.\,120]{B80}, and to spectral zeta functions and doubly connected surfaces by Laugesen and Morpurgo \cite{L98,LM98}.

Lastly, the scale invariant ratio $\lambda_2/\lambda_1$ is maximal for the equilateral triangle among acute triangles, by work of Siudeja \cite{S10}. The conjecture remains open for obtuse triangles. For general domains, this Payne--P\'{o}lya--Weinberger functional is known to be maximal for the disk, by Ashbaugh and Benguria \cite{AB92}.

\subsection*{Neumann eigenvalues} Stronger inequalities are known than the one we found for $\mu_2 A^3/I$ in \eqref{neumanntone} (which is the case $n=2$ of Corollary~\ref{regularpolyDN}). In fact, $\mu_2 A$ is maximal for the equilateral triangle among triangles, and for the square among parallelograms, and for the disk among all bounded plane domains. The first of these stronger inequalities was proved recently by the authors \cite[Theorem~3.1]{LS09}. The second, for parallelograms, is unpublished work of the authors. The third inequality is a result of Szeg\H{o} and Weinberger \cite{S54,W56}. These inequalities for $\mu_2 A$ are stronger because $A^2/I$ is maximal for the equilateral triangle among triangles, for the square among parallelograms, and for the disk among all domains.

Our inequalities in Corollary~\ref{regularpolyDN} hold for all $n \geq 2$. In contrast, the stronger inequalities fail to extend to $n=3$. The maximizing domains are instead somewhat elongated: the ``arithmetic mean'' $(\mu_2+\mu_3)A$ seems to be maximal among isosceles triangles for an aperture slightly greater than $\pi/6$ (according to numerical work), rather than for the equilateral triangle with aperture $\pi/3$; and $(\mu_2+\mu_3)A$ seems to be maximal among parallelograms for the 2:1 rectangle rather than the square (see \cite[\S5]{AB93} for comments on rectangles). The maximizer among convex domains is apparently not known. The only positive result is that the disk is maximal among $4$-fold symmetric domains \cite[\S4]{AB93}. Incidentally, it is open to maximize the geometric mean $\sqrt{\mu_2 \mu_3} A$. The disk is conjectured to be extremal, by I. Polterovich.

Among convex plane domains, it is open to maximize $\mu_2 L^2$. The disk is not the maximizer, because the equilateral triangle and the square have a larger value (in fact, the same value). The maximizer for $\mu_2 D^2$, where $D$ is diameter, is known to be the degenerate obtuse isosceles triangle by work of Cheng \cite[Theorem~2.1]{C75}, \cite[Proposition 3.6]{LS10}. For the problems mentioned above, and for related conjectures on triangles, see \cite[\S{IX}]{LS09}.

Sums of reciprocal Neumann eigenvalues were minimized by Dittmar \cite{D09}, under conformal mapping normalization.

\subsection*{Lower bounds} Sharp \emph{lower} bounds on Dirichlet eigenvalue sums for triangles are proved in a companion paper \cite{LS11a}, under diameter normalization. Lower bounds for the Neumann eigenvalue $\mu_2$ are found in an earlier work \cite{LS10}. References to other lower bounds can be found in those papers.

\section*{Acknowledgments} We are grateful to Mark Ashbaugh for guiding us to relevant literature.

\appendix

\section{\bf Eigenvalues of equilateral triangles, rectangles, disks} \label{eigenapp}

The Dirichlet eigenfunctions of equilateral triangles were derived
about 150 years ago by Lam\'{e} \cite[pp.~131--135]{L66}. (See the treatment in the text of Mathews and Walker \cite[pp.~237--239]{MW70} or in the paper by Pinsky \cite{Pi85}. Note also the recent exposition by McCartin \cite{M03}.) Dirichlet eigenfunctions of rectangles and disks are well known too \cite{B80}. The eigenvalues are:
\begin{align*}
\big\{ (16\pi^2/9) \big[ j_1^2 + j_1j_2 + j_2^2 \big] : j_1,j_2 \geq 1 \big\} & \quad \text{for an equilateral triangle of side $1$,} \\
\big\{ \pi^2 \big[ (j_1/l_1)^2 + (j_2/l_2)^2 \big] : j_1,j_2 \geq 1 \big\} & \quad \text{for a rectangle of side lengths $l_1,l_2$,} \\
\big\{ j_{m,p}^2 : m \geq 0, p \geq 1 \big \} & \quad \text{for the unit disk,}
\end{align*}
where $j_{m,p}$ is the $p$th zero of the Bessel function $J_m$.

The Neumann eigenvalues are:
\begin{align*}
\big\{ (16\pi^2/9) \big[ j_1^2 + j_1j_2 + j_2^2 \big] : j_1,j_2 \geq 0 \big\} & \quad \text{for an equilateral triangle of side $1$,} \\
\big\{ \pi^2 \big[ (j_1/l_1)^2 + (j_2/l_2)^2 \big] : j_1,j_2 \geq 0 \big\} & \quad \text{for a rectangle of side lengths $l_1,l_2$,} \\
\big\{ (j^\prime_{m,p})^2 : m \geq 0, p \geq 1 \big \} & \quad \text{for the unit disk,}
\end{align*}
where $j^\prime_{m,p}$ is the $p$th zero of the Bessel derivative $J_m^\prime$. See \cite{B80,M02}.

The Robin eigenvalues of rectangles and disks can be found by separation of variables. The eigenvalues are known also for the equilateral triangle \cite{M04}.

\end{document}